\documentclass[11pt]{article}

\usepackage[latin1]{inputenc}
\usepackage[english]{babel}
\usepackage[T1]{fontenc}
\usepackage{geometry}
\geometry{lmargin=2.5cm,rmargin=2.5cm}
\usepackage{amsmath, amsthm, amsfonts,bm}
\usepackage{amssymb}
\usepackage{amscd}
\usepackage{graphicx}
\usepackage{epsfig}
\usepackage{mathrsfs}
\usepackage{hyperref}
\usepackage{caption}
\usepackage{tikz}
\usepackage{array}
\usepackage{float}
\usepackage{url}
\usepackage{eepic}
\usepackage[all]{xy}
\usepackage{xcolor}
\usepackage{epstopdf}
\usepackage{mathtools}
\usepackage{enumerate}
\usepackage[toc]{appendix}

\newtheorem{thm}{Theorem}[section]

\newtheorem{lem}[thm]{Lemma}
\newtheorem{prop}[thm]{Proposition}
\theoremstyle{definition}
\newtheorem{defn}[thm]{Definition}
\theoremstyle{remark}



\newcommand{\cO}{{\mathcal{O}}}
\newcommand{\Om}{\Omega}
\newcommand{\wed}{\wedge}
\newcommand{\bN}{\mathbb{N}}
\newcommand{\proj}{\mathrm{proj}}
\newcommand{\del}{\partial}
\newcommand{\adel}{\overline{\partial}}
\newcommand{\rU}{\mathrm{U}}
\newcommand{\ltr}{\triangleleft}
\newcommand{\ltl}{\triangleright}
\newcommand{\btr}{\raisebox{0.2ex}{\hspace{-2pt} \scriptsize{\mbox{$\blacktriangleright$}}}}


\title{\textbf{Spectrum of the $\overline{\partial}$-Laplace operator on zero forms for the quantum quadric $\cO_q(\textbf{Q}_N)$}}
\author{Fredy D\'iaz Garc\'ia \vspace{3pt}\\
\small Mathematical Institute of National Autonomous University of Mexico \\
\small E-mail address: \emph{fredy@im.unam.mx}}


\date{}  

\begin{document}
\maketitle

\abstract{We study the Laplacian operator $\Delta_{\adel}$ associated to a K\"ahler structure $(\Omega^{(\bullet, \bullet)}, \kappa)$ for the Heckenberger--Kolb differential calculus of the quantum quadrics $\cO_q(\textbf{Q}_N)$, which is to say, the irreducible quantum flag manifolds of types $B_n$ and $D_n$. We show that the eigenvalues of $\Delta_{\adel}$ on zero forms tend to infinity and have finite multiplicity.}

\section{Introduction}

Since the discovery of quantum groups in the 1980s mathematicians have been trying to fit them into Connes' framework of noncommutative spin geometry. In fact, the construction of spectral triples for the Drinfeld--Jimbo quantum groups has been a very difficult problem in noncommutative geometry. One of the best-studied spectral triples is that of the Podle\'s sphere \cite{DS} which satisfies most of the Connes conditions for a noncommutative spin geometry, up to some slight modifications. Other constructions have been made in this direction, for example the equivariant isospectral Dirac operators for all the Podle\'s spheres \cite{DLE}, for the quantum projective spaces \cite{FD}, for the quantum group $\cO_q(SU(2))$ \cite{DLS} and for all the compact quantum groups \cite{NT}. In \cite{RO1} a Dirac operator associated to K\"ahler structures is constructed for the quantum projective spaces $\cO_q(\mathbb{CP}^n)$ generalizing the work of S. Majid in \cite{SM}.

In recent years it has been shown that the quantum flag manifolds provide a large family of well behaved quantum homogeneous spaces whose noncommutative geometry is quite similar to the classical situation. It was shown by I. Heckenberger and S. Kolb that the subclass of irreducible quantum flag manifolds $\cO_q(G/L_S)$ admit a unique differential calculus which is a $q$-deformation of their classical de Rham complex 
\cite{HK0}, \cite{HK}. 
It is important to note that the existence of this differential calculus is an extremely special property of these quantum spaces and it is not the case for all quantum spaces. Moreover the existence of this canonical deformation allows us to construct Dirac operators and investigate their spectrum.

The problem of the spectrum of the Dirac operator has been attacked for some cases. For example in \cite{FDL} the spectrum for $\cO_q(\mathbb{CP}^2)$ was calculated. In \cite{FD} the authors showed that the spectrum of the Dolbeault--Dirac operator for the quantum projective spaces has exponential growth by relating it to a Casimir operator. In \cite{KTS} the authors gave a new approach for  the construction of Dolbeault--Dirac operator and quantum Clifford algebras for the irreducible quantum flag manifolds. Later, it was shown in \cite{MM1} that this Dolbeault--Dirac operator gives a spectral triple for the quantum Lagrangian Grassmannian of rank 2 by giving a suitable version of the Parthasarathy formula.

Recently, a different approach to constructing Dolbeault--Dirac operators has appeared in \cite{RO} where  the notion of noncommutative K\"ahler structure for a differential calculus was introduced. In the same paper the author showed the existence of a covariant K\"ahler structure for the Heckenberger--Kolb calculus for the quantum projective spaces. Later this result was generalized in \cite{MM} for all the irreducible quantum flag manifolds, for all but a finite number of values $q$. The existence of such a structure allows us to construct metrics, Hilbert space completions and to verify all the axioms of a spectral triple except the compact resolvent condition, see \cite{RO2} for a more detailed discussion. This last axiom has turned out to be the most challenging one. This problem was solved in \cite{RO1} for the quantum projective spaces $\cO_q(\mathbb{CP}^n)$ by using the existence of the K\"ahler structure and the multiplicity-free property of the space of anti-holomorphic forms. Also, the spectrum was calculated for the quantum quadric $\cO_q(\textbf{Q}_5)$ in \cite{FRW} by using the quantum version of the Bernstein--Gelfand--Gelfand resolution for quantized irreducible flag manifolds \cite{HK1}. 

In this paper we investigate the asymptotic behaviour of the Laplace operator $\Delta_{\adel}$ on the space of zero forms $\Omega^{(0,0)}$ for the irreducible quantum flag manifolds of quadric type. Besides of the existence of the K\"ahler form we make use of the FRT description of the quantum group $\cO_q(SO(N))$ as well as the fact that  $\Omega^{(0,0)}$ is multiplicity-free as a $U_q(\mathfrak{g})$-module to make explicit calculations of the eigenvalues. It has been conjectured in \cite{RO1} that the Dolbeault--Dirac operator $D_{\adel}$ allows us to construct a spectral triple for all the irreducible quantum flag manifolds, here we take a small but significant step towards proving this conjecture.

The paper is organized as follows: in \textsection 2 we recall some necessary definitions and properties of the quantized enveloping algebras, differential calculi, K\"ahler structures as well as irreducible quantum flag manifolds and Takeuchi's equivalence. We also recall the FRT description of the algebras $\cO_q(SO(N))$.

In \textsection 3 we give the most important properties of the Heckenberger--Kolb calculus for the irreducible quantum flag manifolds and a proof of the existence of a K\"ahler structure for the case of quantum quadrics $\cO_q(\textbf{Q}_N)$ by using tools of representation theory.

In \textsection 4 we present the main result of the paper. It is shown by explicit calculation that the eigenvalues of the Laplace operator $\Delta_{\adel}$ on zero forms tend to infinity, supporting the conjecture that the asymptotic behaviour of the Dolbeault--Dirac operator $D_{\adel}$ satisfies the compact resolvent condition for all the irreducible quantum flag manifolds. 
 

\section{Preliminaries}

In this section we recall the necessary definitions of quantum enveloping algebras, covariant differential calculi as well as complex and K\"ahler structures. For a more detailed presentation see \cite{KS}, \cite{RO}, and \cite{ROC}. We also introduce the definition of quantum flag manifolds, Takeuchi's equivalence and the FRT presentation of the quantum coordinate algebra $\cO_q(SO(N))$.

\subsection{Drinfeld--Jimbo quantum groups}

Let $\mathfrak{g}$ a finite dimensional semisimple Lie algebra of rank $r$. Let $\mathfrak{g}$ be a finite-dimensional complex semisimple Lie algebra of rank $r$. We fix a Cartan subalgebra $\mathfrak{h}$ and choose a set of simple roots $\Pi = \{\alpha_1, \dots, \alpha_r\}$ for the corresponding root system in the linear dual of $\mathfrak{g}$. We denote by $(\cdot,\cdot)$ the symmetric bilinear form induced on $\mathfrak{h}^*$ by the Killing form of $\mathfrak{g}$, normalised so that any shortest simple root $\alpha_i$ satisfies $(\alpha_i,\alpha_i) = 2$. The Cartan matrix $(a_{ij})$ of $\mathfrak{g}$ is defined by $a_{ij} := \big(\alpha_i^{\vee},\alpha_j\big)$, where $\alpha_i^{\vee} := 2\alpha_i/(\alpha_i,\alpha_i)$ is the \emph{coroot} of $\alpha_i$.

Throughout the paper $q \in (0,+\infty)\setminus \{1\}$ and denote $q_i := q^{(\alpha_i, \alpha_i)/2}$ and the \emph{$q$-numbers} $[n]_q := \frac{q^n-q^{-n}}{q-q^{-1}}$. The {\em Drinfeld--Jimbo quantised enveloping algebra} $U_q(\mathfrak{g})$ is the noncommutative associative algebra generated by the elements $E_i, F_i, K_i$, and $K^{-1}_i$, for $ i=1, \ldots, r$, with the relations 
\begin{align*}
 K_iE_j = q_i^{a_{ij}} E_j K_i, ~~~~ K_iF_j= q_i^{-a_{ij}} F_j K_i, ~~~~ K_i K_j = K_j K_i, ~~~~ K_iK_i^{-1} = K_i^{-1}K_i = 1, \\
E_iF_j - F_jE_i = \delta_{ij}\frac{K_i - K^{-1}_{i}}{q_i-q_i^{-1}}, ~~~~~~~~~~~~~~~~~~~~~~~~~~~~~~~~~~~~~~
\end{align*}
and the quantum Serre relations which we omit (see \cite[\textsection 6.1.2]{KS} for details).

A Hopf algebra structure on $U_q(\mathfrak{g})$ is defined by 
\begin{align*}
\Delta(K_i) = K_i \otimes K_i, \ \ \Delta(E_i) = E_i \otimes K_i + 1\otimes E_i, \ \ \Delta(F_i) = F_i \otimes 1 + K_i^{-1} \otimes F_i, ~~~~ \\
S(K_i)= K^{-1}_i, \ \ S(E_i) = -E_i K^{-1}_i, \ \ S(F_i) = -K_i F_i, \ \ \varepsilon(K_i)=1, \ \ \varepsilon(E_i) = \varepsilon(F_i)=0.
\end{align*}
A Hopf $\ast$-algebra structure is given by
\begin{align*}
K_i^* = K_i, ~~~~~  E^*_i = K_i F_i, ~~~~~  F^*_i = E_i K_i^{-1}. 
\end{align*}
Let $\{\varpi_1, \ldots, \varpi_r\}$ denote the corresponding set of \emph{fundamental weights} of $\mathfrak{g}$ which is the dual basis of the simple coroots $\{\alpha_1^{\vee}, \ldots, \alpha_r^{\vee} \}$:
\begin{align*}
(\alpha_i^{\vee}, \varpi_j ) = \delta_{ij}.
\end{align*}
Let $\mathcal{P}$ be the weight lattice of $\mathfrak{g}$ and $\mathcal{P}^{+}$ the set of \emph{dominant integral weights} which are the $\mathbb{Z}$-span and $\mathbb{Z}_{\geq 0}$-span of the fundamental weights, respectively. For each $\mu \in  \mathcal{P}^+$ there exists an irreducible finite dimensional $U_q(\mathfrak{g})$-module $V_{\mu}$ uniquely determined by the existence of a vector $\nu_{\mu}$ which is called the \emph{highest weight vector}, satisfying
\begin{align*}
K_i \ltl \nu_{\mu} = q^{(\alpha_i, \mu)}\nu_{\mu}, ~~~~~  E_i \ltl \nu_{\mu}=0.
\end{align*}
Moreover $\nu_{\mu}$ is unique up to scalar multiple. A finite direct sum of such representations is called a \emph{type-1 representation}. A vector $\nu$ is called a \emph{weight vector} of weight $\mathrm{wt}(\nu)\in \mathcal{P}$ if 
\begin{align*}
K_i \ltl \nu = q^{(\alpha_i, \mathrm{wt}(\nu))}\nu.
\end{align*}  
In this paper we use the fact that $U_q(\mathfrak{g})$ has invertible antipode and therefore we have an equivalence between $\mbox{}_{U_q(\mathfrak{g})}$Mod and $\mathrm{Mod}_{U_q(\mathfrak{g})}$ induced by the antipode.



\subsection{Differential calculi}
A {\em differential calculus}  $\big(\Om^{\bullet} \simeq \bigoplus_{k \in \mathbb{N}_0} \Om^k, \mbox{d} \big)$ is a differential graded algebra (dg-algebra) which is generated in degree $0$ as a dg-algebra, that is to say, it is generated as an algebra by the elements $a, \mbox{d} b$, for $a,b \in \Om^0$. We call an element $\omega \in \Om^{\bullet}$ a \emph{form}, and if $\omega \in \Om^k$, for some $k \in \mathbb{N}_0$, then $\omega$ is said to be \emph{homogeneous} of degree $|\omega| := k$.  The product of two forms $\omega, \nu \in \Om^{\bullet}$ is usually denoted by $\omega \wedge \nu$, unless one of the forms is of degree $0$, where we just denote the product by juxtaposition.  

For a given algebra $B$, a \emph{differential calculus over} $B$ is a differential calculus such that $\Om^0 = B$. Note that for a differential calculus over $B$, each $\Om^k$ is a $B$-bimodule.  A differential calculus is said to have  \emph{total degree} $m \in \mathbb{N}_0$, if $\Om^m \neq 0$, and $\Om^k = 0$, for every $k > m$. \\

A {\em differential $*$-calculus} over a $*$-algebra $B$ is a differential calculus over $B$ such that the \mbox{$*$-map} of $B$ extends to a (necessarily unique) conjugate-linear involutive map $*:\Om^\bullet \to \Om^\bullet$ satisfying $\mbox{d}(\omega^*) = (\mbox{d} \omega)^*$, and 
\begin{align*}
\big(\omega \wed \nu \big)^*  =  (-1)^{kl} \nu^* \wed \omega^*, ~~~~~~ \text{ for all } \ \ \omega \in \Om^k, \, \nu \in \Om^l. 
\end{align*}

We say that $\omega \in \Om^{\bullet}$ is \emph{closed} if $\mbox{d} \omega = 0$, and \emph{real} if $\omega^*= \omega$.\\


We now recall the definition of a complex structure as introduced in \cite{ROC}. 
This abstracts the properties of the de Rham complex for a classical complex manifold \cite{DH}. 

\begin{defn}\label{defnnccs}
An {\em almost complex structure} $\Om^{(\bullet,\bullet)}$ for a  differential $*$-calculus  $(\Om^{\bullet},\mbox{d})$ is an $\mathbb{N}^2_0$-algebra grading $\bigoplus_{(a,b)\in \bN^2_0} \Om^{(a,b)}$ for $\Om^{\bullet}$ such that, for all $(a,b) \in \bN^2_0$ the following hold:
\begin{enumerate}[(i)]
\item \label{compt-grading}  $\Om^k = \bigoplus_{a+b = k} \Om^{(a,b)}$,
\item  \label{star-cond} $\big( \Om^{(a,b)} \big)^* = \Om^{(b,a)}$.
\end{enumerate}

We call an element of $\Om^{(a,b)}$ an $(a,b)$-form. For the associated pair of projections $\proj_{\Om^{(a+1,b)}} : \Omega^{a+b+1} \to \Omega^{(a+1,b)}$ and $\proj_{\Om^{(a,b+1)}} : \Omega^{a+b+1} \to \Omega^{(a,b+1)}$, we denote
\begin{align*}
\del|_{\Om^{(a,b)}} : = \proj_{\Om^{(a+1,b)}} \circ \mbox{d}, & & \adel|_{\Om^{(a,b)}} : = \proj_{\Om^{(a,b+1)}} \circ \mbox{d}.
\end{align*}
A  \emph{complex structure} is an almost complex structure which satisfies
\begin{align} \label{eqn:integrable}
\mbox{d} \Om^{(a,b)} \subseteq \Om^{(a+1,b)} \oplus \Om^{(a,b+1)}, ~~~~~~ \text{ for all } \ (a,b) \in \bN^2_0.
\end{align}
\end{defn}

For a complex structure, (\ref{eqn:integrable}) implies the identities
\begin{align*}
\mbox{d} = \del + \adel, & &  \adel \circ \del = - \, \del \circ \adel, & & \del^2 = \adel^2 = 0. 
\end{align*}
In particular, $\big(\bigoplus_{(a,b)\in \bN^2_0}\Om^{(a,b)}, \del, \adel\big)$ is a double complex, which we call  the {\em Dolbeault double complex} of $\Om^{(\bullet,\bullet)}$. Moreover, it is easily seen  that both $\del$ and $\adel$ satisfy the graded Leibniz rule, and  
\begin{align}\label{dif.inv.}
\del(\omega^*) = \big(\adel \omega \big)^*, ~~~~~~~~~ \adel(\omega^*) = \big(\del \omega\big)^*, ~~~~~~~~ \text{for all} ~~~~ \omega \in \Om^{\bullet}.
\end{align}

\subsubsection{Hermitian and K\"ahler structures}

We now present the definition of an Hermitian structure, as introduced in \cite{RO}, as well as a K\"ahler structure, introduced in \textsection7 of the same paper.

\begin{defn} An {\em Hermitian structure} $(\Om^{(\bullet,\bullet)}, \sigma)$ for a differential $*$-calculus $\Om^{\bullet}$ over $B$ of even total degree $2n$  is a pair  consisting of  a complex structure  $\Om^{(\bullet,\bullet)}$ and  a central real $(1,1)$-form $\sigma$, called the {\em Hermitian form}, such that, with respect to the {\em Lefschetz operator}
\begin{align*}
L:\Om^{\bullet} \to \Om^{\bullet},  & &   \omega \mapsto \sigma \wedge \omega,
\end{align*}
isomorphisms are given by
\begin{align*}
L^{n-k}: \Om^{k} \to  \Om^{2n-k}, & & \text{ for all } \ \ k = 0, \dots, n-1.
\end{align*}
\end{defn} 

For $L$ the Lefschetz operator of an Hermitian structure, we denote
\begin{align*}
P^{(a,b)} : = \begin{cases} 
 \{\alpha \in \Om^{(a,b)} \,|\, L^{n-a-b+1}(\alpha) = 0\}, &  \text{ ~ if } a+b \leq n,\\
      0, & \text{ ~ if } a+b > n.
\end{cases}
\end{align*}
Moreover, we denote $P^k := \bigoplus_{a+b = k} P^{(a,b)}$, and $P^{\bullet} := \bigoplus_{k \in \bN_0} P^k$. An element of $P^{\bullet}$ is called a {\em primitive form}. 

An important consequence of the existence of the Lefschetz operator is the Lefschetz decomposition of the differential $*$-calculus, which we now recall below. For a proof see \cite[\textsection 4.1]{RO}.

\begin{prop}[Lefschetz decomposition] \label{LDecomp}
For $L$ the Lefschetz operator of an Hermitian structure on a differential $*$-calculus $\Omega^{\bullet}$, a $B$-bimodule decomposition of $\Omega^{k}$, for all $k \in \bN_0$,  is given by
\begin{align*}
\Om^{k} \simeq \bigoplus_{j \geq 0} L^j \big( P^{k-2j} \big).
\end{align*}
\end{prop}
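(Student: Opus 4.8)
The plan is to prove the decomposition by a purely formal argument from the hard Lefschetz isomorphism built into the definition of an Hermitian structure, noting first that nothing beyond that axiom is needed and that the noncommutativity of $B$ plays no role: since $\sigma$ is central, wedging with $\sigma$ commutes with both the left and the right $B$-action, so $L$ and all its powers are $B$-bimodule maps, $P^k = \ker\big(L^{n-k+1}\colon \Om^k \to \Om^{2n-k+2}\big)$ is a sub-bimodule, and each $L^j\big(P^{k-2j}\big)$ is a sub-bimodule. Thus it suffices to establish the decomposition as one of abelian groups, compatibly with the maps involved.

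First I would record the elementary consequences of the axiom. Since $L^0 = \mathrm{id}$, the isomorphism $L^{n-m}\colon \Om^m \to \Om^{2n-m}$ holds for \emph{all} $0 \le m \le n$. Factoring $L^{n-m} = L \circ L^{n-m-1} = L^{n-m-1}\circ L$ then gives that $L\colon \Om^p \to \Om^{p+2}$ is injective for $p \le n-1$ and surjective for $p \ge n-1$, and more generally that $L^{j}$ is injective on $\Om^m$ whenever $m \le n$ and $j \le n-m$. Next, the core step: for every $k \le n$ I would prove $\Om^k = P^k \oplus L\big(\Om^{k-2}\big)$. For spanning, given $\omega \in \Om^k$, apply $L^{n-k+1}$ to land in $\Om^{2n-k+2} = \Om^{2n-(k-2)}$; as $L^{n-(k-2)}\colon \Om^{k-2}\to \Om^{2n-(k-2)}$ is an isomorphism (here $k-2\le n$), there is $\eta \in \Om^{k-2}$ with $L^{n-k+2}\eta = L^{n-k+1}\omega$, so $L^{n-k+1}(\omega - L\eta) = 0$ and $\omega = (\omega - L\eta) + L\eta \in P^k + L(\Om^{k-2})$. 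For directness, if $L\eta \in P^k$ with $\eta\in\Om^{k-2}$, then $L^{n-(k-2)}\eta = L^{n-k+2}\eta = 0$, and injectivity of $L^{n-(k-2)}$ on $\Om^{k-2}$ forces $\eta = 0$.

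Iterating this identity downward, $\Om^{k-2} = P^{k-2}\oplus L(\Om^{k-4})$ and so on, the recursion terminating once the degree drops to $0$ or $1$ (where $P^{k-2j} = \Om^{k-2j}$ since $L^{n+1}$ lands in degree $\ge 2n+2$), yields $\Om^k = \bigoplus_{j\ge 0} L^j\big(P^{k-2j}\big)$ for all $k\le n$; each splitting $L^{i}(\Om^{k-2i}) = L^{i}(P^{k-2i}) \oplus L^{i+1}(\Om^{k-2i-2})$ stays direct because $L$ is injective on $\Om^{k-2i}$ (as $k-2i \le n-1$ for $i\ge 1$), and one checks $L^j$ is injective on $P^{k-2j}$: if $\gamma\in P^{k-2j}$ with $L^j\gamma = 0$, then since $n-(k-2j)\ge j$ (equivalently $n-k+j\ge 0$, which holds as $k\le n$) we get $L^{n-(k-2j)}\gamma = 0$, hence $\gamma = 0$. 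For $k > n$ I would transport the result: $L^{k-n}\colon \Om^{2n-k}\to \Om^k$ is an isomorphism (the axiom with $m = 2n-k\le n-1$) and $2n-k < n$, so the already-proved case gives $\Om^{2n-k} = \bigoplus_{i\ge 0} L^i\big(P^{2n-k-2i}\big)$; applying the isomorphism and substituting $j = (k-n)+i$ turns the $i$-th summand into $L^j\big(P^{k-2j}\big)$, while the indices $0\le j < k-n$ contribute nothing because then $k-2j > n$ and $P^{k-2j} = 0$.

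The argument is entirely formal and there is no deep obstacle; the only thing requiring genuine care is the bookkeeping of degree ranges in the boundary cases $k = n$, $k - 2j = n$, and $k > n$, together with verifying that every sum written down is in fact direct (which always reduces, via the identity of the core step, to one of the injectivity statements above). I would present the core step as a lemma and then state the two induction arguments ($k\le n$ downward, $k > n$ by transport) succinctly, emphasising throughout that all the maps are $B$-bimodule morphisms so the final decomposition is one of $B$-bimodules.
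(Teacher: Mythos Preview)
The paper does not actually prove this proposition; it simply states it and refers the reader to \cite[\textsection 4.1]{RO} for a proof. So there is no ``paper's own proof'' to compare against here.

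Your argument is the standard one (essentially the classical proof from, e.g., Huybrechts \cite[\textsection 1.2]{DH}, transported verbatim to the abstract setting) and it is correct. The only very minor quibble is the phrasing ``stays direct because $L$ is injective on $\Om^{k-2i}$'': what you actually need at that step is that $L^i$ is injective on $\Om^{k-2i}$, not just $L$. You do have this --- since $k \le n$ implies $i \le n-k+2i = n-(k-2i)$, the injectivity of $L^i$ on $\Om^{k-2i}$ follows from the hard Lefschetz isomorphism $L^{n-(k-2i)}$ --- and indeed you invoke exactly this inequality a line later when checking injectivity of $L^j$ on $P^{k-2j}$. It would read more cleanly to state and use that single observation (``$L^j$ is injective on $\Om^m$ whenever $m+j \le n$'') uniformly. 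Otherwise the bookkeeping in the boundary cases and the transport argument for $k>n$ are handled correctly.
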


In classical Hermitian geometry, the Hodge map of an Hermitian metric is related to the associated Lefschetz decomposition through the well-known Weil formula \cite[\textsection 1.2]{DH}. In the noncommutative setting, we take the direct generalisation of the Weil formula for our definition of the Hodge map, and build upon this to define an Hermitian metric.

\begin{defn} \label{defn:HDefn}
The {\em Hodge map} associated to an Hermitian structure $\big(\Om^{(\bullet,\bullet)},\sigma \big)$ is the $B$-bimodule map $\ast_{\sigma}: \Omega^{\bullet} \to \Omega^{\bullet}$ satisfying, for any $j \in \bN_0$,
\begin{align*}
\ast_{\sigma}\big( L^j(\omega) \big) = (-1)^{\frac{k(k+1)}{2}}\mathbf{i}^{a-b}\frac{j!}{(n-j-k)!}L^{n-j-k}(\omega), & & \omega \in P^{(a,b)} \subseteq P^{k=a+b},
\end{align*}
where $\mathbf{i}^2 = -1$.
\end{defn}

The metric associated to a Hermitian structure $(\Om^{(\bullet, \bullet)}, \sigma)$ is the map $g_{\sigma} : \Omega^{\bullet} \times \Omega^{\bullet} \rightarrow B$, for which $g(\Om^k, \Om^l)=0$ for $k\neq l$ and 
\[ g_{\sigma} (\omega, \nu) :=  *_{\sigma} ( *_{\sigma}(\omega^*) \wedge \nu ), ~~~~~ \text{ if }  \ \omega, \nu \in \Om^k. \]



\begin{defn} \label{defn:posdefHerm}
We say that an Hermitian structure $(\Omega^{(\bullet,\bullet)}, \sigma)$ is \emph{positive definite} if the associated metric $g_{\sigma}$ satisfies 
\begin{align*}
g_{\sigma}(\omega, \omega) \in B_{>0}:= \Big\{ \sum_{i=1}^{m} \lambda_i b^{*}b_i : b_i \in B, \ \ \lambda_i \in \mathbb{R}_{>0}, \ m\in \mathbb{N} \Big\}, \ \ \mbox{for all non-zero} \ \ \omega \in \Omega^{\bullet}.
\end{align*}
In this case we say that $\sigma$ is a {\em positive definite Hermitian form}. 
\end{defn}

\begin{defn}
A {\em K\"ahler structure} for a differential $*$-calculus is an Hermitian structure $(\Om^{(\bullet,\bullet)},\kappa)$ such that $\kappa$ is closed, which is to say, $\mbox{d} \kappa = 0$. We call such a form $\kappa$ a  {\em K\"ahler form}.
\end{defn}

\subsection{Irreducible quantum flag manifolds}\label{quantumflags}
Let $V$ be a finite-dimensional $U_q(\mathfrak{g})$-module, $v \in V$, and $f \in V^*$, the linear dual of $V$. Consider the function $c^{\textrm{\tiny $V$}}_{f,v}: U_q(\mathfrak{g}) \to \mathbb{C}$ defined by $c^{\textrm{\tiny $V$}}_{f,v}(X) := f\big(X(v)\big)$.
The {\em coordinate ring} of $V$ is the subspace
\begin{align*}
C(V) := \text{span}_{\mathbb{C}}\!\left\{ c^{\textrm{\tiny $V$}}_{f,v} \,| \, v \in V, \, f \in V^*\right\} \subseteq U_q(\mathfrak{g})^*.
\end{align*}
It is easy to check that $C(V)$ is contained in $U_q(\mathfrak{g})^\circ$, the Hopf dual of $U_q(\mathfrak{g})$, and moreover that a Hopf subalgebra of $U_q(\mathfrak{g})^\circ$ is given by 
\begin{align} \label{eqn:PeterWeyl}
\mathcal{O}_q(G) := \bigoplus_{\lambda \in \mathcal{P}^+} C(V_{\lambda}).
\end{align}
This algebra can be endowed with a dual $\ast$-structure and it is called the {\em quantum coordinate algebra of $G$}, where $G$ is the compact, simply-connected, simple Lie group having $\mathfrak{g}$ as its complexified Lie algebra. Moreover, we call the decomposition of $\cO_q(G)$ given in \eqref{eqn:PeterWeyl} the \emph{Peter--Weyl decomposition} of $\cO_q(G)$.

For $S$ a subset of simple roots, consider the Hopf \mbox{subalgebra} of $U_q(\mathfrak{g})$ given by 
\begin{align*}
U_q(\mathfrak{l}_S) := \big< K_i, E_j, F_j \,|\, i = 1, \ldots, r ; j \in S \big>.
\end{align*} 
There are obvious notions of weight vectors, highest weight vectors as well as type-1 representations. It can be shown that every irreducible finite dimensional type-1 $U_q(\mathfrak{l}_S)$-module admits a highest weight vector which is unique up to scalar multiple. Moreover, the weight of this highest weight vector determines the module up to isomorphism and the there is a correspondence between irreducible type-1 $U_q(\mathfrak{l}_S)$ modules and weights in the sub-lattice
\begin{align*}
\mathcal{P}^+_S + \mathcal{P}_{S^c} \subseteq \mathcal{P},
\end{align*}
where $S^c = \Pi \setminus S$ and
\begin{align*}
\mathcal{P}^+_S &:= \Big\{ \lambda \in \mathcal{P}: \lambda = \sum_{j\in S} \lambda_j \varpi_j, \ \ \lambda_j \in  \mathbb{Z}_{\geq 0} \Big\},  \\
\mathcal{P}_{S^c} &:= \Big\{ \lambda \in \mathcal{P}: \lambda = \sum_{j\in S^c} \lambda_j \varpi_j \ \ \lambda_j \in \mathbb{Z} \Big\}.  
\end{align*}
Just as for $\cO_q(G)$, we can construct the type-1 dual of $U_q(\mathfrak{l}_S)$ using matrix coefficients, this Hopf algebra will be denoted by $\cO_q(L_S)$. Restriction of domains gives us a surjective Hopf $\ast$-algebra map $\pi_S: \cO_q(G) \rightarrow \cO(L_S)$, dual to the inclusion of $U_q(\mathfrak{l}_S)$ in $U_q(\mathfrak{g})$. The {\em quantum flag manifold associated to S} is the quantum homogeneous space associated to $\pi_S$
\begin{align*}
\cO_q \big(G/L_S \big) &:= \cO_q(G)^{co(\cO_q(L_S))} \\
& \ = \{ b\in \cO_q(G): b_{(1)}\langle X , b_{(2)} \rangle = \varepsilon(X)b  ~~~~ \text{for all} \ \  X\in U_q(\mathfrak{l}_S) \}.
\end{align*}  
We say that the quantum flag manifold is irreducible if $S= \Pi \setminus \{\alpha_x \}$ where $\alpha_x$ has coefficient $1$ in the expansion of the highest root of $\mathfrak{g}$. In the table \ref{table:CQFMs} we give a diagrammatic presentation of the set of simple roots defining the irreducible quantum flag manifold, where the node corresponding to $\alpha_x$ is the coloured one.

\begin{center}
\begin{table}[ht] 


\centering 

\captionof{table}{\small{Irreducible Quantum Flag Manifolds with nodes numbered according to \cite[\textsection11.4]{JH}.}}\label{table:CQFMs}

{\small \renewcommand{\arraystretch}{2}

\begin{tabular}{|c|c|c|c|}
 
\hline

\small $A_n$ &
\begin{tikzpicture}[scale=.4]
\draw
(0,0) circle [radius=.25] 
(8,0) circle [radius=.25] 
(2,0)  circle [radius=.25]  
(6,0) circle [radius=.25] ; 

\draw[fill=black]
(4,0) circle  [radius=.25] ;

\draw[thick,dotted]
(2.25,0) -- (3.75,0)
(4.25,0) -- (5.75,0);

\draw[thick]
(.25,0) -- (1.75,0)
(6.25,0) -- (7.75,0);
\end{tikzpicture} & \small $\cO_q(\text{Gr}_{s,n+1})$ & \small quantum Grassmannian  \\


\small $B_n$ &
\begin{tikzpicture}[scale=.4]
\draw
(4,0) circle [radius=.25] 
(2,0) circle [radius=.25] 
(6,0)  circle [radius=.25]  
(8,0) circle [radius=.25] ; 
\draw[fill=black]
(0,0) circle [radius=.25];

\draw[thick]
(.25,0) -- (1.75,0);

\draw[thick,dotted]
(2.25,0) -- (3.75,0)
(4.25,0) -- (5.75,0);

\draw[thick] 
(6.25,-.06) --++ (1.5,0)
(6.25,+.06) --++ (1.5,0);                      

\draw[thick]
(7,0.15) --++ (-60:.2)
(7,-.15) --++ (60:.2);
\end{tikzpicture} & \small $\cO_q(\mathbf{Q}_{2n+1})$ & \small {odd} quantum quadric  \\ 


\small $C_n$& 
\begin{tikzpicture}[scale=.4]
\draw
(0,0) circle [radius=.25] 
(2,0) circle [radius=.25] 
(4,0)  circle [radius=.25]  
(6,0) circle [radius=.25] ; 
\draw[fill=black]
(8,0) circle [radius=.25];

\draw[thick]
(.25,0) -- (1.75,0);

\draw[thick,dotted]
(2.25,0) -- (3.75,0)
(4.25,0) -- (5.75,0);

\draw[thick] 
(6.25,-.06) --++ (1.5,0)
(6.25,+.06) --++ (1.5,0);                      

\draw[thick]
(7,0) --++ (60:.2)
(7,0) --++ (-60:.2);
\end{tikzpicture} &\small   $\cO_q(\mathbf{L}_{n})$ & \small 
quantum Lagrangian Grassmannian    \\ 

\small $D_n$& 
\begin{tikzpicture}[scale=.4]

\draw[fill=black]
(0,0) circle [radius=.25] ;

\draw
(2,0) circle [radius=.25] 
(4,0)  circle [radius=.25]  
(6,.5) circle [radius=.25] 
(6,-.5) circle [radius=.25];

\draw[thick]
(.25,0) -- (1.75,0)
(4.25,0.1) -- (5.75,.5)
(4.25,-0.1) -- (5.75,-.5);

\draw[thick,dotted]
(2.25,0) -- (3.75,0);
\end{tikzpicture} &\small   $\cO_q(\mathbf{Q}_{2n})$ & \small even quantum quadric  \\

\small $D_n$ & 
\begin{tikzpicture}[scale=.4]
\draw
(0,0) circle [radius=.25] 
(2,0) circle [radius=.25] 
(4,0)  circle [radius=.25] ;

\draw[fill=black] 
(6,.5) circle [radius=.25];
\draw
(6,-.5) circle [radius=.25];

\draw[thick]
(.25,0) -- (1.75,0)
(4.25,0.1) -- (5.75,.5)
(4.25,-0.1) -- (5.75,-.5);

\draw[thick,dotted]
(2.25,0) -- (3.75,0);
\end{tikzpicture} &\small   $\cO_q(\textbf{S}_{n})$ & \small quantum spinor variety   \\

\small $E_6$& \begin{tikzpicture}[scale=.4]
\draw
(2,0) circle [radius=.25] 
(4,0) circle [radius=.25] 
(4,1) circle [radius=.25]
(6,0)  circle [radius=.25] ;

\draw
(0,0) circle [radius=.25];
\draw[fill=black] 
(8,0) circle [radius=.25];

\draw[thick]
(.25,0) -- (1.75,0)
(2.25,0) -- (3.75,0)
(4.25,0) -- (5.75,0)
(6.25,0) -- (7.75,0)
(4,.25) -- (4, .75);
\end{tikzpicture}

 &\small  $\cO_q(\mathbb{OP}^2)$ & \small  quantum Caley plane   \\
\small $E_7$& 
\begin{tikzpicture}[scale=.4]
\draw
(0,0) circle [radius=.25] 
(2,0) circle [radius=.25] 
(4,0) circle [radius=.25] 
(4,1) circle [radius=.25]
(6,0)  circle [radius=.25] 
(8,0) circle [radius=.25];

\draw[fill=black] 
(10,0) circle [radius=.25];

\draw[thick]
(.25,0) -- (1.75,0)
(2.25,0) -- (3.75,0)
(4.25,0) -- (5.75,0)
(6.25,0) -- (7.75,0)
(8.25, 0) -- (9.75,0)
(4,.25) -- (4, .75);
\end{tikzpicture} &\small   $\cO_q(\textrm{F})$ 
& \small   quantum Freudenthal variety   \\
\hline
\end{tabular}
}
\end{table}
\end{center}

For the irreducible $U_q(\mathfrak{g})$-module $V_{\varpi_x}$ we choose a weight basis $\{v_1, \ldots, v_N\} \subset V_{\varpi_x}$ with corresponding dual basis $\{f_1, \ldots , f_N\}\subset V_{\varpi_x}^{*}$, where $N= \mathrm{dim}(V_{\mu})$.  
It was shown in \cite[Theorem 4.1]{HK} that a set of generators for $\mathcal{O}_q(G/L_S)$ in the irreducible case is given by 
\begin{align}\label{generators}
z_{ij}:= c^{\varpi_x}_{f_i,v_N}c^{-w_0 \varpi_x}_{v_j,f_N} ~~~~~~~   \text{for} \ \ i,j=1, \ldots, N=\mathrm{dim}(V_{\varpi_x}),
\end{align}
where $v_N$ is the highest weight basis element of $V_{\varpi_x}$ and $f_N$ is the lowest weight basis element of $V_{-w_0(\varpi_x)}$. Note that if $v_1$ is the lowest weight then $f_1$ is the highest weight. 
\subsubsection{Takeuchi's equivalence}
In the following we denote $\mbox{}^{~~~~\cO_q(G)}_{\cO_q(G/L_S)}\mathrm{Mod}_0$ the category whose objects are relative Hopf modules $\mathcal{F}$ such that $\mathcal{F}\cO_q(G/L_S)^{+} = \cO_q(G/L_S)^{+} \mathcal{F}$ and morphisms are left $\cO_q(G)$-comodule, $\cO(G/L_S)$-bimodule maps where $\cO_q(G)^{+}=\cO_q(G)\cap \mathrm{Ker}(\varepsilon)$. Let $\mbox{}^{\cO(L_S)}\mathrm{Mod}$ be the category whose objects are $\cO(L_S)$-comodules and morphisms are $\cO_q(L_S)$-comodule maps. By \emph{Takeuchi's equivalence}, see for example \cite{MT}, \cite[\textsection 2.2]{HK}, an equivalence of the categories $\mbox{}^{~~~~\cO_q(G)}_{\cO_q(G/L_S)}\mathrm{Mod}_{0}$ and $\mbox{}^{\cO_q(L_S)}\mathrm{Mod}$ is given by the functors
\begin{align*}
& \Phi: \mbox{}^{~~~~\cO_q(G)}_{\cO_q(G/L_S)}\mathrm{Mod}_0 \rightarrow \mbox{}^{\cO_q(L_S)}\mathrm{Mod}, ~~~~~~~~ \Phi(\Gamma) = \Gamma/\cO_q(G/L_S)^{+}\Gamma,  \\
& \Psi: \mbox{}^{\cO_q(L_S)}\mathrm{Mod} \rightarrow \mbox{}^{~~~ \cO_q(G)}_{\cO(G/L_S)}\mathrm{Mod}_0, ~~~~~~~~~  \Psi(V)= \cO_q(G) \square_{\cO_q(L_S)} V.   
\end{align*} 
Here for $\Gamma \in \mbox{}^{~~~\cO_q(G)}_{\cO(G/L_S)}\mathrm{Mod}_0$ the left $\cO(L_S)$-comodule structure on $\Gamma/\cO_q(G/L_S)^{+}\Gamma$ is induced by the left $\cO_q(G)$-comodule structure of $\Gamma$. The left $\cO_q(G)$-comodule and $\cO(G/L_S)$-bimodule structures on the cotensor $\cO_q(G) \square_{\cO_q(L_S)} V$ are given by the coproduct and $\cO(G/L_S)$-bimodule structure of $\cO_q(G)$, respectively. Takeuchi's equivalence states that natural isomorphisms are given by 
\begin{align*}
& \mathrm{C}:  \Phi \circ \Psi(V) \rightarrow V,   ~~~~~~~  \Big[ \sum_i a^i \otimes v^i \Big] \mapsto \sum_i \varepsilon(a^i) v^i, \\
& \mathrm{U}: \mathcal{F} \rightarrow \Psi \circ \Phi(\mathcal{F}), ~~~~~~~~~~~~~~~  f \mapsto f_{(-1)}\otimes [f_{(0)}]. 
\end{align*}

\subsection{The Hopf algebras $\cO_q(SO(N))$}

In this subsection we give a brief description of the FRT presentation of the Hopf algebras $\cO_q(SO(N))$, see \cite{KS} for details. We first give a bit of notation. We denote $i'= N+1 -i$. Let $N=2n$ if $N$ is even and $N=2n+1$ if $N$ is odd. We define 
\begin{align*}
\rho_i = \frac{N}{2}-i \ \ \text{if} \ \  i<i', \ \  \rho_{i'} = -\rho_i \ \ \text{if} \ \  i\leq i'. 
\end{align*}

Recall that the R-matrix associated to the quantum $\cO_q(SO(N))$ is given by 

\begin{equation}
R^{ij}_{mn} = q^{\delta_{ij}-\delta_{ij'}}
\delta_{im}\delta_{jn}+(q-q^{-1})\theta(i-m)(\delta_{jm}\delta_{in}- \delta_{ji'}\delta_{mn'}q^{-\rho_{j}-\rho_{m}}).
\end{equation}

Let $U_q(\mathfrak{so}_N)^{\circ}$ denote the dual Hopf algebra of $U_q(\mathfrak{so}_N)$. Then $\cO_q(SO(N)) \subset U_q(\mathfrak{so}_N)^{\circ}$ is the Hopf subalgebra generated by matrix coefficients of the $N$-dimensional irreducible $U_q(\mathfrak{so}_N)$-representation of highest weight $\varpi_1$\footnote{$\cO_q(SO(2n+1))$ is a proper subalgebra of $\cO_q(G)$ because $\cO_q(SO(2n+1))$ is just generated by the matrix coefficients of $V_{\varpi_1}$ and it does not contain all the matrix coefficients of all the representations of type 1, for example,  it doest not contain the matrix coefficients of the spin representations, but $\cO_q(G)^{U_q(\mathfrak{l})} = \cO_q(SO(2n+1))^{U_q(\mathfrak{l})}$.}. We denote the matrix coefficients by $u^i_j$, $i,j=1, \ldots, N$, they satisfy the relations
\begin{align}\label{RmatrixRel}
& \sum _{k,l=1}^{N} R^{ij}_{kl}u^k_mu^l_n = \sum_{k,l=1}^N u^j_ku^i_lR^{lk}_{mn}, \ \ i,j,m,n = 1, \ldots, N, \\
& \mathcal{D}_q = 1,  \ \  \sum_{i,j,k=1}^N C^i_j(C^{-1})^k_mu^n_iu^k_j = \sum_{i,j,k=1}^N C^n_i(C^{-1})^j_ku^j_iu^k_m = \delta_{mn}, \ \ m,n=1, \ldots,N,
\end{align}
where the C-matrix coefficients and the quantum determinant are given as in \cite[\textsection 9.3]{KS}. The Hopf $\ast$-algebra structure on  $\cO_q(SO(N))$ is determined on generators by
\begin{align*}
\Delta(u^i_j) = \sum_k u^i_k \otimes u^k_j, ~~~~~ \varepsilon(u^i_j)= \delta_{ij}, ~~~~~ S(u^i_j)= q^{\rho_j - \rho_i}u^{j'}_{i'}, ~~~~~ u^{i*}_j= S(u^j_i).
\end{align*}

It follows from \cite[\textsection 9.4]{KS} that there exists a dual pairing $\langle \cdot , \cdot \rangle$ of the Hopf algebras $U_q(\mathfrak{so}_N)$ and $\cO_q(SO(N))$ which in turn induces a left action $U_q(\mathfrak{so}_N) \otimes \cO_q(SO(N)) \ni  X \otimes a \mapsto X \ltl a \in \cO_q(SO(N))$
and a right action $\cO_q(SO(N))\otimes U_q(\mathfrak{so}_N) \ni a \otimes X \mapsto a \ltr X \in \cO_q(SO(N))$ 
such that $\cO_q(SO(N))$ is a left and a right $U_q(\mathfrak{so}_N)$-module $\ast$-algebra. The actions on generators are given as follows: 
\begin{align}
& E_j \ltl u^i_j = u^i_{j+1}, ~~~  E_j \ltl u^i_{(j+1)'} = -u^i_{j'}, ~~~ F_{j} \ltl u^i_{j+1} = u^i_{j}, ~~~ F_j \ltl u^i_{j'}= -u^i_{(j+1)'},  \  \  j<n, \label{actEiodd} \\
& E_n \ltl u^i_n = [2]^{1/2}_{q_2} u^i_{n+1}, ~~~  E_n \ltl u^i_{n+1} = -q_2 [2]^{1/2}_{q_2}u^i_{n+2}, \\
& F_n \ltl u^i_{n+1} = [2]^{1/2}_{q_2} u^i_{n}, ~~~ F_n \ltl u^i_{n+2} = -q^{-1}_2[2]^{1/2}_{q_2} u^i_{n+1}, 
\end{align}
for $i=1, \ldots, N$ and $N=2n+1$ where $q_2= q^{1/2}$. For $N=2n$ we have 
\begin{align}
& E_j \ltl u^i_j =  u^i_{j+1}, ~~~  E_j \ltl u^i_{(j+1)'} = -u^i_{j'}, ~~~ F_j \ltl u^i_{j+1} = u^i_j, ~~~   F_j \ltl u^i_{j'} = u^i_{(j+1)'}  \  \   i<n, \\
& E_n \ltl u^i_n = -u^i_{n+2}, ~~~  E_n \ltl u^i_{n-1} = u^i_{n+1},  \\
& F_n \ltl u^i_{n+2} = -u^i_{n}, ~~~ F_n \ltl u^i_{n+1} = u^i_{n-1}. \label{actEieven}
\end{align}
The corresponding right actions for $N=2n+1$ are given by
\begin{align}
&  u^i_j \ltr F_i = u^{i+1}_{j}, ~~~  u^{(i+1)'}_{j} \ltr F_i = -u^{i'}_{j}, ~~~  u^{i+1}_{j} \ltr E_i = u^i_{j}, ~~~ u^{i'}_{j} \ltr E_i = -u^{(i+1)'}_{j},  \  \  i<n, \label{actEioddr} \\
& u^n_j \ltr F_n = [2]^{1/2}_{q_2} u^{n+1}_j, ~~~  u^{n+1}_j \ltr F_n = -q_2 [2]^{1/2}_{q_2}u^{n+2}_j, \\
& u^{n+1}_j \ltr E_n = [2]^{1/2}_{q_2} u^{n}_j, ~~~  u^{n+2}_j \ltr E_n = -q^{-1}_2[2]^{1/2}_{q_2} u^{n+1}_j, \label{actEnoddr} 
\end{align}
and for even $N=2n$
\begin{align}
& u^i_j \ltr F_i =  u^{i+1}_{j}, ~~~  u^{(i+1)'}_j \ltr F_i = -u^{i'}_j, ~~~ u^{i+1}_j \ltr E_i = u^i_j, ~~~ u^{i'}_j \ltr E_i = u^{(i+1)'}_j,  \  \   i<n, \label{actEievenr} \\
& u^n_j \ltr F_n = -u^{n+2}_j, ~~~  u^{n-1}_j \ltr F_n  = u^{n+1}_j,  \\
& u^{n+2}_j \ltr E_n = -u^{n}_j, ~~~ u^{n+1}_j \ltr E_n = u^{n-1}_j, \label{actEnevenr}
\end{align}
for $i=1, \ldots, N$. The values $E_j \ltl u^k_l$, $F_j \ltl u^k_l$, $u^k_l \ltr E_j$ and $u^k_l \ltr F_j$ for all other cases are zero.

For the convenience of the reader, the next lemma collects the commutation relations of the generators of $\cO_q(SO(N))$ which will be used in this paper.
\begin{lem}\label{Rels}
If $u^i_j, i,j =1, ..., N$ are the generators of $\mathcal{O}_q(SO(N))$ then the following commutation relations hold
\begin{align}
&u^i_1u^i_N = q^2 u^i_Nu^i_1, \ i\neq i', ~~~ \label{ui1uiN} \\
&u^i_lu^i_k = q u^i_ku^i_l, \ l<k, \ i\neq i', \\
&u^j_lu^i_k = u^i_ku^j_l, \ l<k, \ i<j, \ l\neq k', \ i\neq j', \label{ujluik} \\
&u^j_1u^i_N = q u^i_Nu^j_1, \ i <j, \ i\neq j',  \\
&u^i_1u^j_N = qu^j_Nu^i_1 + (q^2 -1)u^i_N u^j_1,  \ i <j, \ i\neq j', \label{uiNu1} \\
&u^i_ku^j_l = u^j_lu^i_k - (q-q^{-1})u^j_ku^i_l, \ i<j, \ k<l, \ i\neq j', \ k\neq l', \label{uikujl} \\
&u^i_k u^j_k = qu^j_k u^i_k, \ k\neq k', \ i<j, \ i \neq j', \label{uikujk} \\
&(u^1_1u^2_a - q u^2_1 u^1_a )(u^1_1 u^2_N - q u^2_1u^1_N) = q^2(u^1_1 u^2_N - qu^2_1u^1_N)(u^1_1u^2_a - q u^2_1 u^1_a ),  \label{hwhol} \\
&(u^1_1u^2_N - qu^2_1u^1_N)( u^1_{a} u^2_N - q u^2_{a} u^1_N ) = q^2 ( u^1_{a} u^2_N - q u^2_{a} u^1_N )(u^1_1u^2_N - qu^2_1u^1_N), \label{hwantihol}
\end{align}
where  $a=2, \ldots, N-1$.
\end{lem}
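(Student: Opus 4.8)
The relations are all consequences of the defining relations of $\cO_q(SO(N))$ — the $R$-matrix relations \eqref{RmatrixRel} and the orthogonality ($C$-matrix) relations recorded just after them — together with the explicit form of the $R$-matrix; the proof is a direct, if somewhat lengthy, computation. Throughout, $\theta$ denotes the step function, $\theta(t)=1$ for $t>0$ and $\theta(t)=0$ otherwise, and $i'=N+1-i$ as above.

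The plan is to exploit the sparsity of the $R$-matrix. For fixed upper indices $i,j$, the entry $R^{ij}_{kl}$ is non-zero only when $(k,l)=(i,j)$, when $(k,l)=(j,i)$ (and then only if $i>j$), or — in the paired situation $j=i'$ — when $(k,l)=(k,k')$ for some $k<i$. Consequently, after substituting suitable lower indices $m,n$, each side of
$$\sum_{k,l=1}^{N} R^{ij}_{kl}\, u^k_m u^l_n \;=\; \sum_{k,l=1}^{N} u^j_k u^i_l\, R^{lk}_{mn}$$
collapses to a short sum of monomials in the generators. For the relations \eqref{ui1uiN}--\eqref{uikujk} I would choose the free indices so that the paired terms are absent: the hypotheses $i\neq i'$, $i\neq j'$, $k\neq k'$ and $l\neq k'$ in the statement are exactly what kills, on either side of the equation, the terms carrying the factor $\delta_{ji'}\delta_{mn'}q^{-\rho_j-\rho_m}$. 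Once these are gone, at most two monomials survive on each side and the asserted identity is read off by matching coefficients; the scalar prefactors $q^{\delta_{ij}-\delta_{ij'}}$ and the single surviving $\theta(i-m)$-contribution — the latter being the source of the $(q-q^{-1})$, equivalently $(q^2-1)$, terms in \eqref{uiNu1} and \eqref{uikujl} — can then be identified directly.

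For the two quadratic relations \eqref{hwhol} and \eqref{hwantihol}, which concern the degree-two elements $z_a := u^1_1 u^2_a - q\, u^2_1 u^1_a$ and $w := u^1_1 u^2_N - q\, u^2_1 u^1_N$ with $a=2,\dots,N-1$, I would expand the products $z_a w$ and $w z_a$ using the already-established relations \eqref{ui1uiN}--\eqref{uikujk} restricted to the generators with upper indices in $\{1,2\}$ and lower indices in $\{1,a,N\}$, together with the auxiliary identities $u^1_1 u^1_a = q\,u^1_a u^1_1$ and $u^2_1 u^2_a = q\,u^2_a u^2_1$ (which are themselves instances of those relations), and check that the two expansions agree up to the overall factor $q^2$. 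Alternatively, one may observe that $z_a$ and $w$ are matrix coefficients of the second quantum exterior power $\Lambda^2_q(\mathbb{C}^N)$ of the vector corepresentation, whose commutation relations are governed by the corresponding image of $R$, and read \eqref{hwhol} and \eqref{hwantihol} off from there.

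The computations are routine, and the only genuine difficulty is bookkeeping: tracking which $\delta$- and $\theta$-terms of $R$ survive under each index specialisation, and — when the chosen lower indices happen to satisfy $m=n'$ (as for $m=1$, $n=N$) — resolving the residual sum over paired indices by means of the orthogonality relations. This is most delicate for the quadratic relations \eqref{hwhol} and \eqref{hwantihol}, where a naive expansion produces many terms; fixing in advance the order in which monomials are normalised, so that the first-part relations are used consistently as rewriting rules, is what makes that step manageable.
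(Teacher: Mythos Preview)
Your proposal is correct, and for relations \eqref{ui1uiN}--\eqref{uikujl} it coincides with the paper's argument. There are, however, two places where the paper takes a different and somewhat more economical route.

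First, for \eqref{uikujk} the paper does not read it off directly from the $R$-matrix relation (although, as your outline correctly suggests, one can: taking $m=n=k$ with $k\neq k'$ and $i<j$, $i\neq j'$ leaves exactly one monomial on each side). Instead the paper obtains it by specialising \eqref{ujluik} to adjacent lower indices and then acting on both sides with the $E_i$'s and $F_i$'s via the left $U_q(\mathfrak{so}_N)$-module structure recorded in \eqref{actEiodd}--\eqref{actEieven}. Your direct approach here is in fact the shorter one.

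Second, and more substantially, for the quadratic relations \eqref{hwhol}--\eqref{hwantihol} the paper avoids the full direct expansion you propose. For \eqref{hwantihol} it first establishes, using \eqref{ui1uiN}--\eqref{uikujk}, the single identity $u^1_N\,y = q^{-1}\,y\,u^1_N$ with $y=u^1_1u^2_N-qu^2_1u^1_N$, and then acts on both sides by $E_1$ and successively by further $E_i$'s to obtain $u^1_N(u^1_au^2_N-qu^2_au^1_N)=q^{-1}(u^1_au^2_N-qu^2_au^1_N)u^1_N$ for all $a$ at once. The remaining commutations of $u^1_1$, $u^2_1$, $u^2_N$ with $u^1_au^2_N-qu^2_au^1_N$ are checked directly from \eqref{ui1uiN}--\eqref{uikujk} and then combined; \eqref{hwhol} is handled analogously. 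This use of the module action trades the case-by-case bookkeeping of your expansion in $a$ for a single propagation argument. Your direct expansion (or the $\Lambda^2_q$ interpretation) certainly works, but this is the shortcut the paper exploits. A minor aside: the $C$-matrix (orthogonality) relations are not actually needed anywhere --- the paired-index sums you anticipate are avoided simply by choosing the ordering $(m,n)=(N,1)$ rather than $(1,N)$ in the $R$-matrix relation, which makes the relevant $\theta$-factor vanish.
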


\begin{proof}
Equations \eqref{ui1uiN}-\eqref{uikujl} follow directly from the R-matrix relations \eqref{RmatrixRel}. For instance, if  we take $m=N$, $n=1$ and $i>j$ with $i\neq j'$ in relation \eqref{RmatrixRel} we obtain 
\begin{align*}
u^i_N u^j_1 +(q-q^{-1})u^j_N u^i_1 = q^{-1} u^j_1 u^i_N,  
\end{align*}
which yields equation \eqref{uiNu1}. Equation \eqref{uikujk} follows by choosing $k=l+1$ in \eqref{ujluik} and acting on both sides by the $E_i$'s or  $F_i$'s from the left by using formulas \eqref{actEiodd}-\eqref{actEieven}. We only give the proof of \eqref{hwantihol} since the proof of \eqref{hwhol} is analogous. By using \eqref{ui1uiN}-\eqref{uikujk} it can be show that $u^1_N(u^1_1 u^2_N -q u^2_1 u^1_N) = q^{-1} (u^1_1 u^2_N -q u^2_1 u^1_N)u^1_N$. If we act on both sides by $E_1$ from the left we get 
\begin{align*}
u^1_N(u^1_2 u^2_N -q u^2_2 u^1_N) = q^{-1} (u^1_2 u^2_N -q u^2_2 u^1_N)u^1_N. 
\end{align*} 
Then the relation $u^1_N(u^1_a u^2_N -q u^2_a u^1_N) = q^{-1} (u^1_a u^2_N -q u^2_au^1_N)u^1_N$ can be obtained by acting from the left by the $E_i$'s according to identities \eqref{actEiodd}-\eqref{actEieven}. The commutation relations 
\begin{align*}
u^1_1(u^1_a u^2_N - q u^2_a u^1_N) = q^2(u^1_a u^2_N - q u^2_a u^1_N)u^1_1, \quad \    u^2_N        (u^1_a u^2_N - q u^2_a u^1_N)= (u^1_a u^2_N - q u^2_a u^1_N)u^2_N,  \\
   u^2_1 ( u^1_a u^2_N - q u^2_a u^1_N ) = (u^1_a u^2_N - q u^2_a u^1_N) u^2_1 ~~~~~~~~~~~~~~~~~~~~~~~~~~~~~~~~~~
\end{align*}
can be easily deduced from \eqref{ui1uiN}-\eqref{uikujk}. Combining this results we have that \eqref{hwantihol} holds for $a=2, \ldots,N-1$. 
\end{proof}
For the $B_n$ and $D_n$ family in which case $\varpi_x =\varpi_1$ we can view $\cO_q(SO(N))\subset \cO_q(G)$ by choosing a weight basis $\{v_j\}_{j=1}^N$ of $V_{\varpi_1}$ such that $c^{\varpi_1}_{f_i,v_j}= u^i_j$ and $c^{-w_0 (\varpi_1)}_{v_j,f_i}=S(u^i_j)$, see \cite{FRT} or \cite{KS} for details. Therefore by \eqref{generators} the generators of $\cO_q(\mathrm{Q}_N)$ are given by 
\begin{align*}
z_{ij} = 
u^i_N S(u^N_j).
\end{align*} 

\section{The Heckenberger--Kolb calculi and the K\"ahler structure}


In this section we recall important facts about Heckenberger--Kolb calculi for irreducible quantum flag manifolds as well as the existence of a K\"ahler structure for them. For more details we refer the reader to the seminal papers \cite{HK} and \cite{HK1} for the Heckenberger--Kolb calculus and \cite{MM}, \cite{RO} for K\"ahler structures. 

As mentioned before, the irreducible quantum flag manifolds are distinguished by existence of a unique $q$-deformed de Rham complex. The following theorem collects the main results of \cite{HK0}, \cite{HK}, and \cite{MM}.  
\begin{thm} 
\label{HKcalc}
For any irreducible quantum flag manifold $\cO_q(G/L_S)$, there exists a unique finite dimensional left $\cO(G)$-covariant differential $\ast$-calculus
\begin{align*}
\Om^{\bullet}_q(G/L_S) \in  \mbox{}^{~~~ \cO_q(G)}_{\cO_q(G/L_S)}\mathrm{Mod}_{0},
\end{align*} 
of classical dimension
\begin{align*}
\mathrm{dim} \, \Phi \big( \Om^{k}_q(G/L_S) \big) = \left( \begin{array}{c} \!\! 2M  \!\! \\  \!\! k  \!\! \end{array} \right), ~~~~  \mbox{for all} ~~ k = 1, \ldots, 2M, 
\end{align*} 
where $M$ is the complex dimension of the corresponding classical manifold. Moreover
\begin{enumerate}[(i)]
\item $\Om^{\bullet}_q(G/L_S)$ admits a unique left $\cO_q(G)$-covariant complex structure
\begin{align*}
\Om^{\bullet}_q(G/L_S) \simeq \bigoplus_{ (a,b) \in \mathbb{N}_0 } \Om^{(a,b)} =: \Om^{(\bullet,\bullet)}, 
\end{align*}
\item $\Om^{(1,0)}$ and $\Om^{(0,1)}$ are irreducible objects in $\mbox{}^{~~~ \cO_q(G)}_{\cO_q(G/L_S)}\mathrm{Mod}_0$.
\end{enumerate}
\end{thm}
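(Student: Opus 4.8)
The plan is to assemble the theorem from the Heckenberger--Kolb classification of covariant calculi together with a representation-theoretic analysis carried out through Takeuchi's equivalence; below I describe the architecture, indicate where each ingredient enters, and say which step is the real obstacle.

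First I would treat the first-order part. By Takeuchi's equivalence, left $\cO_q(G)$-covariant first-order differential $\ast$-calculi over $\cO_q(G/L_S)$ lying in $\mbox{}^{~~~\cO_q(G)}_{\cO_q(G/L_S)}\mathrm{Mod}_0$ correspond bijectively to quotient $U_q(\mathfrak{l}_S)$-modules of a fixed finite-dimensional module built from the conormal space of the base point of $G/L_S$. Heckenberger and Kolb compute this module explicitly in the irreducible case and show that it has exactly two nonzero proper subobjects, which are moreover complementary; see \cite{HK0, HK}. Dualising, one obtains two one-form sub-bimodules $\Om^{(1,0)}$ and $\Om^{(0,1)}$ with $\Om^1 = \Om^{(1,0)} \oplus \Om^{(0,1)}$, and this is the unique covariant calculus of the required rank $2M$. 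This already yields part (ii): under $\Phi$ the two summands are the $q$-analogues of the holomorphic and anti-holomorphic cotangent spaces of $G/L_S$ at the base point, and for an irreducible flag manifold these are irreducible $\mathfrak{l}_S$-modules classically (the $|1|$-grading $\mathfrak{g}/\mathfrak{l}_S = \mathfrak{g}_{-1}\oplus\mathfrak{g}_{1}$ with $\mathfrak{g}_{\pm1}$ irreducible); since for real $q>1$ the module theory of $U_q(\mathfrak{l}_S)$ mirrors the classical one, irreducibility persists.

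For the higher forms I would take $\Om^{\bullet}_q(G/L_S)$ to be the maximal prolongation of $\Om^1$. The dimension identity $\dim\Phi(\Om^k) = \binom{2M}{k}$ is established by transporting the computation to $\mbox{}^{\cO_q(L_S)}\mathrm{Mod}$ via $\Phi$ and identifying $\Phi(\Om^k)$ with the $k$-th exterior power of the $2M$-dimensional conormal module; exterior powers of a $q$-deformed module have $q$-independent dimension, so the count reduces to the classical one, as in \cite{HK, MM}. To produce the complex structure of part (i) I would set $\Om^{(a,b)}$ to be the image in $\Om^{a+b}$ of products of $a$ factors from $\Om^{(1,0)}$ with $b$ factors from $\Om^{(0,1)}$, and then check: that this is a well-defined $\bN^2_0$-algebra grading, compatible with the relations of the maximal prolongation; that $\big(\Om^{(a,b)}\big)^{*} = \Om^{(b,a)}$, which holds because the $\ast$-operation interchanges $\Om^{(1,0)}$ and $\Om^{(0,1)}$; and, crucially, the integrability inclusion $\mbox{d}\,\Om^{(a,b)} \subseteq \Om^{(a+1,b)} \oplus \Om^{(a,b+1)}$ of \eqref{eqn:integrable}. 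Once integrability is in place, the identities $\mbox{d} = \del + \adel$, $\del^2 = \adel^2 = 0$ and $\del\adel + \adel\del = 0$ follow formally, as recorded after Definition~\ref{defnnccs}. Uniqueness of the complex structure is then automatic: any left $\cO_q(G)$-covariant complex structure restricts on $\Om^1$ to a decomposition into two nonzero covariant subobjects, and by (ii) the only such decomposition is $\Om^{(1,0)} \oplus \Om^{(0,1)}$; the labelling of which summand is the $(1,0)$-part is fixed by $\big(\Om^{(1,0)}\big)^{*} = \Om^{(0,1)}$, and since the calculus is generated in degree one the entire bigrading is then determined.

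The main obstacle is the rank statement underlying the first two steps: one must show that the maximal prolongation neither acquires extra relations --- which would make $\dim\Phi(\Om^k)$ too small --- nor lacks relations --- which would make it too large. Equivalently this is a flatness / Poincar\'e-series statement for the braided exterior algebra generated by $\Om^{(1,0)} \oplus \Om^{(0,1)}$, and it is the technical heart of \cite{HK}; this is the ingredient I would quote rather than reprove. Everything else --- Takeuchi's equivalence, the two-subobject decomposition, the passage to $q=1$ for irreducibility, and the formal consequences of integrability --- is then bookkeeping.
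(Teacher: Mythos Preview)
Your sketch is sound and correctly describes how the results of \cite{HK0,HK,MM} fit together, but note that the paper itself does \emph{not} prove Theorem~\ref{HKcalc}: it is presented as a collection of known results from the literature, with the single sentence ``We recall that the $\ast$-calculus and the covariant complex structure were given in \cite{MM}'' serving as the only proof-adjacent remark. So there is nothing substantive to compare against beyond the citations --- and your outline is faithful to what those references actually do (Takeuchi transport to $U_q(\mathfrak{l}_S)$-modules, the two-subobject decomposition of \cite{HK0,HK}, the dimension/flatness computation for the maximal prolongation in \cite{HK}, and the $\ast$- and complex-structure content of \cite{MM}).

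One small gap in your uniqueness argument for the complex structure: the condition $\big(\Om^{(1,0)}\big)^{*} = \Om^{(0,1)}$ does not by itself determine which of the two irreducible summands of $\Om^1$ is the $(1,0)$-part, since swapping them yields the conjugate complex structure, which equally satisfies the $\ast$-condition of Definition~\ref{defnnccs}. In the sources (see \cite{MM}, and also \cite{ROC}) the statement is ``unique up to conjugate'', and the unqualified ``unique'' in the theorem should be read that way, or with an external convention (e.g.\ a choice of positive root direction) fixing the holomorphic side.
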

The part (i) of the Theorem \ref{HKcalc} is shown in \cite{MM} where the $\ast$-structure on the generators of $\cO_q(G/L_S)$ is given by $(z_{ij})^* = z_{ji}$.
The following theorem gives us the existence of the K\"ahler structure for the Heckenberger--Kolb calculus.

\begin{thm}(\cite[Theorem 5.10]{MM}).
Let $\Om^{\bullet}_q(G/L_S)$ be the Heckenberger--Kolb calculus of the irreducible quantum flag manifold $\cO_q(G/L_S)$. Then there exists a form $\kappa \in \Om^{(1,1)}$ such that $(\Om^{(\bullet, \bullet)}, \kappa)$ is a covariant K\"ahler structure for all $q \in \mathbb{R}_{>0}\setminus F$, where $F$ is a finite, possibly empty, subset of $\mathbb{R}_{>0}$. Moreover, any element of $F$ is necessarily non-transcendental.
\end{thm}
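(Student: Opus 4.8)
The plan is to reduce the whole statement to the representation theory of $U_q(\mathfrak{l}_S)$ through Takeuchi's equivalence, and to confine the $q$-dependence to a single family of determinants. Write $B := \cO_q(G/L_S)$ and let $M$ be the complex dimension of the underlying classical flag manifold, so that the calculus has total degree $2M$. Since the left $\cO_q(G)$-coaction commutes with $\mbox{d}$, with the complex structure of Theorem \ref{HKcalc}, and with the wedge product, a left-coinvariant element of $\Om^{(a,b)}$ is the same datum as an $\cO_q(L_S)$-coinvariant vector of $\Phi(\Om^{(a,b)})$; indeed the $\cO_q(G)$-coinvariants of $\cO_q(G)\,\square_{\cO_q(L_S)}\,\Phi(\Om^{(a,b)})$ are exactly the $\cO_q(L_S)$-coinvariants of $\Phi(\Om^{(a,b)})$. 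Hence coinvariant $(a,b)$-forms are counted by the multiplicity of the trivial comodule in $\Phi(\Om^{(a,b)})$, and since $\Phi(\Om^{(a,b)})$ is a flat $q$-deformation of the classical comodule $\bigwedge^{(a,b)}$ with unchanged multiplicities, these equal the classical Hodge numbers $h^{a,b}$. For every irreducible flag manifold (Picard number one, vanishing first and third Betti numbers) one has $h^{1,1}=1$ and $h^{2,1}=h^{1,2}=0$.

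First I would construct $\kappa$. By Theorem \ref{HKcalc}(ii) the comodules $\Phi(\Om^{(1,0)})$ and $\Phi(\Om^{(0,1)})$ are irreducible, and by the $\ast$-axiom of Definition \ref{defnnccs} they are mutually dual, so $h^{1,1}=1$ and the space of coinvariant $(1,1)$-forms is one-dimensional. Fix a nonzero generator; since $\ast$ preserves $\Om^{(1,1)}$ and carries coinvariants to coinvariants it preserves this line, so after rescaling the generator is real, and we call it $\kappa$. Centrality of $\kappa$ would then be verified from the explicit bimodule structure of $\Om^{(1,1)}$ in the Takeuchi picture, the point being that a coinvariant form spanning a trivial subcomodule is central because the braiding intertwining the left and right $B$-actions acts trivially on it; this is an algebraic identity, holding for every $q\in\mathbb{R}_{>0}$.

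Closedness of $\kappa$ is again representation-theoretic. Because $\mbox{d}$ commutes with the coaction, $\mbox{d}\kappa=\del\kappa+\adel\kappa$ is a coinvariant $3$-form whose components $\del\kappa$ and $\adel\kappa$ are coinvariant $(2,1)$- and $(1,2)$-forms; as $h^{2,1}=h^{1,2}=0$ both vanish, so $\mbox{d}\kappa=0$. (One may alternatively note $(\del\kappa)^{\ast}=\adel\kappa$ by \eqref{dif.inv.} together with the reality of $\kappa$, reducing to the vanishing of a single coinvariant space.) In particular the K\"ahler condition holds for all $q\in\mathbb{R}_{>0}$, so the exceptional set $F$ can only arise from the Hermitian (Lefschetz) requirement; note also that the theorem asks only for a K\"ahler structure, so no positivity of the metric need be established here.

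The final and hardest step is the Lefschetz condition, which produces $F$. Each $L^{M-k}:\Om^{k}\to\Om^{2M-k}$ is a left-covariant $B$-bimodule map between objects of equal classical dimension $\binom{2M}{k}$, so by Takeuchi's equivalence it is an isomorphism precisely when the induced $\cO_q(L_S)$-comodule map $\Phi(L^{M-k})$ has nonzero determinant. In a weight-adapted basis the entries of $\Phi(L^{M-k})$ are built from $q$-integers, hence are Laurent polynomials in $q$ with integer coefficients, and so is $p_k(q):=\det\Phi(L^{M-k})$ after clearing a common power of $(q-q^{-1})$. Specialising to $q=1$ returns the classical Lefschetz operator, for which the hard Lefschetz theorem on the compact K\"ahler flag manifold gives $p_k(1)\neq 0$; therefore no $p_k$ vanishes identically. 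Taking $F:=\bigcup_{k=0}^{M-1}\{\,q\in\mathbb{R}_{>0}:p_k(q)=0\,\}$ yields a finite set, and each of its elements, being a positive real root of a Laurent polynomial with integer coefficients, is algebraic, hence non-transcendental; for $q\notin F$ every $L^{M-k}$ is an isomorphism and $(\Om^{(\bullet,\bullet)},\kappa)$ is a K\"ahler structure. I expect the main obstacle to lie exactly here: making the specialisation argument rigorous uniformly across all types --- controlling the $q\to 1$ limit of the structure constants, ensuring the comodule multiplicities do not jump, and matching the limit with classical hard Lefschetz --- together with the verification of centrality.
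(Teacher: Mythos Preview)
The paper does not give its own proof of this theorem; it is quoted from \cite{MM}. The closest thing to a proof in the paper is Lemma~\ref{Kahlerform}, which treats the quadric case explicitly, and your outline matches both that lemma and (as far as one can infer) Matassa's general argument: pass to coinvariants via Takeuchi, use that $^{co(L_S)}\Phi(\Om^{(1,1)})$ is one-dimensional to produce a real coinvariant $(1,1)$-form $\kappa$, deduce $\del\kappa=\adel\kappa=0$ from the vanishing of $^{co(L_S)}\Phi(\Om^{(2,1)})$ and $^{co(L_S)}\Phi(\Om^{(1,2)})$, and finally reduce the Lefschetz isomorphisms to the non-vanishing of finitely many Laurent polynomials in $q$ whose value at $q=1$ is nonzero by classical hard Lefschetz. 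Your identification of the exceptional set $F$ and the reason its elements are algebraic is exactly the mechanism behind the cited result.

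The one genuine discrepancy is the centrality step. You argue that a coinvariant $(1,1)$-form is central because ``the braiding intertwining the left and right $B$-actions acts trivially on it''. This is not a valid deduction: left $\cO_q(G)$-coinvariance of a form (equivalently, triviality of the corresponding $\cO_q(L_S)$-subcomodule) does not by itself force the left and right $B$-module actions on it to coincide; there is no braiding on the relative Hopf module category that plays this role. The paper proceeds in the opposite order: it first establishes $\mbox{d}\kappa=0$ from the representation-theoretic vanishing, and \emph{then} invokes \cite[Lemma~4.6]{RO}, which says that a left-coinvariant closed real $(1,1)$-form is automatically central. Your argument becomes correct once you swap these two steps and replace the braiding heuristic by that citation.
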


It has been shown in \cite[\textsection 5.7]{RP} that if $\cO_q(G)$ is a compact quantum group algebra and $q$ belongs to a suitable open interval around $1$ then an inner product on $\Om^{(\bullet, \bullet)}$ can be defined by
\begin{align}\label{innerprodc}
\langle \cdot , \cdot \rangle : \Om^{(\bullet, \bullet)} \times \Om^{(\bullet, \bullet)} \rightarrow \mathbb{C}, ~~~~~~  (\omega, \nu) \mapsto \mbox{\textbf{h}} \circ g_{\sigma}(\omega, \nu),
\end{align}
where $\textbf{h}$ is the Haar state associated to $\cO_q(G)$ \cite[\textsection 11.3.2]{KS}. From now on we consider $q\in (1,\epsilon)$ so that the inner product is well defined. With respect to this inner product we consider the adjoints $\mathrm{d}^{\dagger}$, $\del^{\dagger}$ and $\adel^{\dagger}$. It is shown in \cite[\textsection 5.4]{RO} that
\begin{align}\label{adjoints}
\mathrm{d}^{\dagger} = -\ast_{\kappa} \circ \, \mathrm{d} \circ \ast_{\kappa}, ~~~~~~~~ \del^{\dagger} = -\ast_{\kappa}\circ \, \adel \circ \ast_{\kappa}, ~~~~~~~~ \adel^{\dagger} = -\ast_{\kappa} \circ \, \del \circ \ast_{\kappa}.
\end{align} 
Then Theorem \ref{HKcalc} and the inner product \eqref{innerprodc} allow us to define the $\adel$-Dirac operator and the $\adel$-Laplacian 
\begin{align*}
D_{\overline{\partial}} := \overline{\partial} + \overline{\partial}^{\dagger}, ~~~~~~~~~
 \Delta_{\overline{\partial}}:=( \overline{\partial} + \overline{\partial}^{\dagger})^2. 
\end{align*}

Let $I$ be the set $\{1, \ldots, N:= \mathrm{dim}(V_{\varpi_x})\}$, it follows from Proposition 3.3 and Proposition 3.4 in \cite{HK} that $\Phi(\Om^{(1,0)})$ and $\Phi(\Om^{(0,1)})$ is generated as a vector space by $\{[\partial z_{iN}] \ | \ i\in I_{(1)} \}$ and $\{[\overline{\partial}z_{Ni}] \ | \ i \in I_{(1)}\}$ respectively, where $I_{(1)}= \{ i\in I: (\varpi_x, \varpi_x -\alpha_x -\mbox{wt}(v_i))=0 \}$.

\begin{lem}\label{Sphewh}
For the Heckenberger-Kolb calculus of $\cO_q(\mathbf{Q}_N)$ it holds that 
\begin{enumerate}[(i)]
\item $\Phi(\Omega^{(0,1)}) = \Omega^{(0,1)}/\cO_q(\mathbf{Q}_N)^{+} \Omega^{(0,1)} =  \mathrm{Lin}_{\mathbb{C}} \big\{ [\overline{\partial}z_{Ni}]: i = 2,\ldots, N-1 \big\}$ and 
$\Phi(\Omega^{(1,0)}) = \Omega^{(1,0)}/\cO_q(\mathbf{Q}_N)^{+} \Omega^{(1,0)} = \mathrm{Lin}_{\mathbb{C}} \big\{ [\partial z_{iN}]: i = 2,\ldots, N-1 \big\}$

\item if we consider $\mathcal{O}_q(G/L_S)$ as a left $U_q(\mathfrak{g})$-module with action $X \btr z := z \ltr S(X)$ then the generators of $ \mathcal{O}_q(\mathrm{Q}_{N})$ are given by $z:= u^1_1u^1_N$, $y := u^1_1u^2_N - qu^{2}_1u^1_N$ with weights $2\varpi_1$, $\varpi_2$, respectively.
\end{enumerate}
\end{lem}

\begin{proof}
(i)  Since $v_N$ is the highest weight vector of $V_{\varpi_1}$ we have $(\varpi_1, \varpi_1 -\alpha_1 -\mbox{wt}(v_N))= (\varpi_1, \varpi_1 -\alpha_1 - \varpi_1) = -(\varpi_1, \alpha_1) \neq 0$, thus $v_N \notin I_{(1)}$. Similarly, if we consider $v_1$ as the lowest weight vector, then $\mbox{wt}(v_1) = w_0(\varpi_1)= -\mbox{id}(\varpi_1)= -\varpi_1$. Then $(\varpi_1, \varpi_1 -\alpha_1 -\mbox{wt}(v_1))= (\varpi_1, \varpi_1 -\alpha_1 +\varpi_1)= 2(\varpi_1,\varpi_1)-(\varpi_1, \alpha_1) = (2r_1-1)(\varpi_1, \alpha_1)$ where $r_1$ is the coefficient of $\alpha_1$ in the expression of $\varpi_1$ as linear combination of the $\alpha_1, \ldots , \alpha_n$. From the form of the Cartan matrix of type $B_n$ and $D_n$ it can be shown that $r_1 =1$ and therefore $v_1 \notin I_{(1)}$. It also can be shown that $M= N-2$ for both cases $\cO_q(\mathbf{Q}_{2n+1})$ and $\cO_q(\mathbf{Q}_{2n})$, see for example \cite[Table 2]{RP}. Therefore $I_{(1)}=\{2, \ldots, N-1\}$.

For the proof of (ii) we first note that the left and right actions on matrix coefficients are given by
\begin{align}\label{leftright}
(X \triangleright c^{\varpi_1}_{f_i,v_j} \triangleleft Y)(Z):= f_i(YZX \triangleright v_j) = c^{\varpi_1}_{f_i \triangleleft Y, X \triangleright v_j}(Z),  
\end{align}
for all $X, Y, Z\in U_q(\mathfrak{g})$. From \ref{leftright} or identities \eqref{actEiodd}-\eqref{actEieven} we have that $z,y \in \cO_q(\mathbf{Q}_N)$.
By \eqref{leftright} we have $E_i \btr (u^1_1 u^1_N) = (u^1_1 u^1_N) \triangleleft S(E_i) = -(c^{\varpi_1}_{f_1 \triangleleft E_i ,v_1} c^{\varpi_1}_{f_1 \triangleleft K_i, v_1} + c^{\varpi_1}_{f_1,v_1}c^{\varpi_1}_{f_1 \triangleleft E_i ,v_N}) \ltr K_i^{-1} = 0$ since $f_1$ is the highest weight of the dual representation. On the other hand, 
\begin{align*}
K_i \btr (u^1_1 u^1_N) = (u^1_1 u^1_N) \triangleleft K^{-1}_i = c^{\varpi_1}_{f_1 \triangleleft K^{-1}_i,v_j} c^{\varpi_1}_{f_1 \triangleleft K^{-1}_i ,v_N}= q^{(2\varpi_1, \alpha_i)}u^1_1 u^1_N. 
\end{align*}

Similarly, since $f_1$ is a highest weight vector we have for $i\neq 1$ 
\begin{align}\label{hwE}
(u^1_1 u^{2}_N -q u^{2}_1 u^1_N) \ltr E_i = c^{\varpi_1}_{f_1,v_1} c^{\varpi_1}_{f_2 \ltr E_i ,v_N} - q c^{\varpi_1}_{f_2 \ltr E_i, v_1} c^{\varpi_1}_{f_1 \ltr K_i, v_N}.  
\end{align}
However, since $v_1$ is the lowest weight vector of $V_{\varpi_1}$ with weight $w_0 \varpi_1 = -\varpi_1$ then  for $i \neq 1$ the vector $v_1$ is the lowest weight with weight zero of the $U_q(\mathfrak{sl}_2)_i$-module $U_q(\mathfrak{sl}_2)_i v_N$ where $U_q(\mathfrak{sl}_2)_i := \langle E_i, F_i, K_i \rangle$. Therefore $E_i v_1 = 0$ for $i\neq 1$ and $E_1 v_1= v_2$. This in turn implies that $f_2 \ltr E_i =0$ for $i\neq 1$. Then the right hand side of equation \eqref{hwE} is zero. Since $f_2 \ltr E_1 =f_1$ and $f_1 \ltr K_1 = f_1$ we have
\begin{align*}
E_1 \btr (u^1_1 u^{2}_N -q u^{2}_1 u^1_N) & = -(u^1_1 u^{2}_N -q u^{2}_1 u^1_N) \ltr E_1K_1^{-1} \\
&= -(c^{\varpi_1}_{f_1,v_1} c^{\varpi_1}_{f_2 \ltr E_1 ,v_N} - q c^{\varpi_1}_{f_2 \ltr E_1, v_1} c^{\varpi_1}_{f_1 \ltr K_1, v_N}) \ltr K_i^{-1} \\
& =0.  
\end{align*}
Finally, since $K_i E_1 v_1 = q^{(\alpha_i, \alpha_1)} E_1 K_i v_1 = q^{(\alpha_i, \alpha_1) - (\varpi_1, \alpha_i)} E_1 v_1 $ we have
\begin{align*}
K_i \btr (u^1_1 u^{2}_N -q u^{2}_1 u^1_N) &= (u^1_1 u^{2}_N -q u^{2}_1 u^1_N) \ltr K^{-1}_i \\
&= c^{\varpi_1}_{f_1 \ltr K^{-1}_i,v_1} c^{\varpi_1}_{f_2 \ltr K^{-1}_i ,v_N} - q c^{\varpi_1}_{f_2 \ltr K^{-1}_i, v_1} c^{\varpi_1}_{f_1 \ltr K^{-1}_i, v_N} \\
& = q^{( 2\varpi_1 - \alpha_1 , \alpha_i)} (c^{\varpi_1}_{f_1, v_1} c^{\varpi_1}_{f_2 ,v_N} - q c^{\varpi_1}_{f_2, v_1} c^{\varpi_1}_{f_1, v_N}) \\
&= q^{(\varpi_2 , \alpha_i)}(u^1_1 u^{2}_N - q u^{2}_1 u^1_N),  
\end{align*}
where we used the fact that $\alpha_i = \sum_j a_{ji} \varpi_j$ to derive the third equality. From this calculations we have that $z$ and $y$ are highest weight vectors with the correct weights. According to Table \ref{sphericalweights} of the appendix \ref{tablesph} we have that the left $U_q(\mathfrak{so}_N)$-module generated by $\{z,y\}$ is $\cO_q(\mathbf{Q}_N)$. 
\end{proof}


\begin{lem}\label{adelzzadel}
Let $\partial$, $\overline{\partial}$, $z,y$ be as before. Then the following relations hold
\begin{enumerate}[(i)]
\item $\overline{\partial} z z= q^{-2} z\overline{\partial} z$,
\item $\overline{\partial} y y= q^{-2} y\overline{\partial} y$,
\item $\partial z z= q^{2} z\partial z$,
\item $\partial y y= q^{2} y\partial y$.
\end{enumerate}
\end{lem}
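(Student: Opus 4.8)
The four identities are really two pairs related by the $\ast$-operation, so the plan is to prove (i) and (ii) directly and then deduce (iii) and (iv) by applying the involution. Recall from \eqref{dif.inv.} that $\partial(\omega^\ast) = (\adel\omega)^\ast$, and that $z = u^1_1 u^1_N$ and $y = u^1_1 u^2_N - q u^2_1 u^1_N$ are elements of $\cO_q(G/L_S) = \Omega^{(0,0)}$, which is a $\ast$-subalgebra. Thus once (i) is established, applying $\ast$ to $\adel z\, z = q^2 z\, \adel z$ gives $z^\ast (\adel z)^\ast = q^2 (\adel z)^\ast z^\ast$, i.e. $z^\ast \partial(z^\ast) = q^2 \partial(z^\ast)\, z^\ast$; it then remains to check that $z^\ast$ is, up to a nonzero scalar and possibly an analogous generator, governed by the same relation, or more cleanly, to repeat the argument below verbatim with the commutation relations \eqref{hwhol}, \eqref{hwantihol} read in the opposite order. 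Concretely I expect (iii) and (iv) to follow from (i) and (ii) by the symmetry $q \leftrightarrow q^{-1}$ together with \eqref{dif.inv.}, so the substantive content is entirely in (i) and (ii).

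\textbf{The core computation for (i) and (ii).} The strategy is: first establish the purely algebraic commutation relation inside $\cO_q(G/L_S)$, namely $z z = $ (no twist needed, $z$ commutes with itself trivially) — rather, the relevant input is that $z$ and $y$ each satisfy a $q^2$-commutation relation with a \emph{second} generator obtained by applying $\partial$ or $\adel$. The key observation from Lemma \ref{Sphewh}(iii) is that the holomorphic and anti-holomorphic generators of the calculus, after passing through Takeuchi's equivalence, are represented by $\partial z_{iN}$ and $\adel z_{Ni}$; and the relations \eqref{hwhol}--\eqref{hwantihol} of Lemma \ref{Rels} are precisely the $q^2$-commutation relations among the elements $u^1_1 u^2_a - q u^2_1 u^1_a$ (for $a = 2,\dots,N$) and among $u^1_a u^2_N - q u^2_a u^1_N$. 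So the plan is: apply $\adel$ to the defining product $z = u^1_1 u^1_N$ using the graded Leibniz rule, $\adel z = \adel(u^1_1) u^1_N + u^1_1 \adel(u^1_N)$, express $\adel z$ and $\adel y$ in terms of the basis elements $[\adel z_{Ni}]$ via the module structure, and then use the commutation relations of Lemma \ref{Rels} — specifically \eqref{hwantihol} and its $z$-analogue — to commute $\adel z$ past $z$ (respectively $\adel y$ past $y$), picking up exactly the factor $q^2$. An equivalent and perhaps cleaner route: note that $\adel z$ is a highest-weight vector (or extremal vector) in the $U_q(\mathfrak g)$-module $\Omega^{(0,1)}$ of a definite weight, $z$ is a highest-weight vector in $\Omega^{(0,0)}$, and the $q$-commutation of two extremal vectors of known weights in a module algebra is forced by the braiding; the exponent $2$ should come out as (a multiple of) the inner product of the two weights, here related to $(2\varpi_1, \cdot)$ or $(\varpi_2,\cdot)$ evaluated appropriately.

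\textbf{Carrying it out.} In detail I would: (1) Use the graded Leibniz rule for $\adel$ to write $\adel(z z) = \adel(z) z + z\,\adel(z)$ is \emph{not} what we want — instead start from the product structure of $z$ itself, and observe that $\adel z$, as an element of $\Omega^{(0,1)}$, can be written as $w\cdot \adel z_{Ni_0}$ or more precisely as a left $\cO_q(G/L_S)$-multiple of the generating one-forms, so that the relation we must prove reduces to a commutation relation in the algebra $\cO_q(G/L_S)$ tensored with the (one-dimensional, after Takeuchi) fibre. (2) Reduce, via covariance and the irreducibility of $\Omega^{(0,1)}$ from Theorem \ref{HKcalc}(ii), to checking the identity on a single weight vector — the highest-weight generator — where it becomes the scalar identity encoded in \eqref{hwantihol} (for $y$, weight $\varpi_2$) or its $z$-counterpart built from \eqref{hwhol} (for $z$, weight $2\varpi_1$). (3) Read off the exponent: in both cases the relevant relations in Lemma \ref{Rels} carry the factor $q^2$, giving (i) and (ii). (4) Apply $\ast$ and use \eqref{dif.inv.} together with $\ast$ replacing $q$ by $q^{-1}$ in these relations to obtain (iii) and (iv).

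\textbf{Main obstacle.} The delicate point is step (2): making precise the passage from the abstract one-forms $\adel z$, $\adel y$ to concrete algebra elements to which Lemma \ref{Rels} applies, i.e. correctly tracking the left-module (as opposed to bimodule) structure of $\Omega^{(0,1)}$ under Takeuchi's equivalence and verifying that the relevant "second generator" that $z$ must $q^2$-commute with is indeed (a scalar multiple of) one of the elements $u^1_a u^2_N - q u^2_a u^1_N$ appearing in \eqref{hwantihol}, rather than some other combination. Equivalently, if one prefers the representation-theoretic route, the obstacle is justifying that the braiding/flip governing the product of a $(0,0)$-form with a $(0,1)$-form acts on the relevant extremal vectors by the single scalar $q^2$ and nothing more — this requires knowing that no lower-weight correction terms intrude, which is exactly where one invokes that $z$ and $\adel z$ are extremal (highest-weight) and that $\Omega^{(0,1)}$ is irreducible. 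Once that bookkeeping is pinned down, the computation is a short application of Lemma \ref{Rels}.
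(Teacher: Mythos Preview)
Your core strategy --- reduce via Takeuchi's equivalence to concrete commutation relations in $\cO_q(SO(N))$ and then invoke Lemma~\ref{Rels}, specifically \eqref{hwhol} and \eqref{hwantihol} --- is exactly what the paper does. The paper makes your ``main obstacle'' (step~(2)) completely explicit: it applies the Takeuchi unit map $\rU(\omega)=\omega_{(-1)}\otimes[\omega_{(0)}]$ to $(\adel z)\,z$ (resp.\ $(\adel y)\,y$), expands the coproduct of $z=z_{1N}$ and $y=q(z_{2,N}-q^{-2}z_{1,N-1})$, uses that $[\adel z_{jk}]=0$ unless $j=N$ and $k\in\{2,\dots,N-1\}$ to collapse the sum, and then the surviving coefficients are precisely the elements governed by \eqref{hwantihol} (for $y$) and the simpler relations \eqref{ui1uiN}--\eqref{uikujk} (for $z$). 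This is the clean resolution of the bookkeeping you were worried about; there is no need for the braiding/extremal-vector heuristic.

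One point to flag: your $\ast$-shortcut for deducing (iii)--(iv) from (i)--(ii) does not quite work as written. Applying $\ast$ to $\adel z\cdot z=q^{2}z\cdot\adel z$ yields $\partial(z^{\ast})\cdot z^{\ast}=q^{-2}z^{\ast}\cdot\partial(z^{\ast})$, but $z^{\ast}=q^{\rho_1-\rho_N}u^{N}_{1}u^{N}_{N}$ is not a scalar multiple of $z=u^1_1u^1_N$ (it is a lowest-weight rather than highest-weight vector), so this is a different identity. You anticipated this and offered the correct fallback (``repeat the argument verbatim''), and that is indeed what the paper does: it proves (iii) and (iv) by the direct Takeuchi computation and simply remarks that (i) and (ii) follow ``in a similar way'' using the same lemmas. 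So drop the $\ast$-reduction and just run the $\rU$-computation four times (or twice, citing symmetry).
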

\begin{proof}
We only show (i) and (ii), equations (iii) and (iv) can be shown in a similar way by using Lemma \ref{Rels} and Lemma \ref{Sphewh}. By Takeuchi's equivalence there exists an isomorphism in the category ${}^{\cO_q(SO(N))}_{~~~ \cO_q(Q_N)}\mathrm{Mod}_0$ given by
\begin{align*} 
\rU : \Omega^{(0,1)} \rightarrow  \mathcal{O}_q(SO(N)) \square_{\cO_q(L_S)} \Phi(\Omega^{(0,1)}), \qquad \omega \mapsto \omega_{(-1)} \otimes [\omega_{(0)}]. 
\end{align*}
Similar isomorphism exists for $\Omega^{(1,0)}$. Now we recall by Lemma \ref{Sphewh} that $\{[\adel z_{Nj}]: j=2,\ldots,N-1\}$ and  $\{[\del z_{jN}]$, $j=2,\ldots, N-1\}$ are bases of the vector spaces $\Phi(\Om^{(0,1)})$ and $\Phi(\Om^{(1,0)})$, respectively.
Moreover, it was shown in \cite[\textsection 3.2]{HK} that $[\adel z_{ij}]=[\del z_{ji}]=0$ if $i\neq N$ and $j \in \{1,N\}$.

Then by definition of $\rU$ and relations in Lemma \ref{Rels} we have
\begin{align*}
\rU \big( (\adel z_{1N})z_{1N} \big) &= \rU (\adel z_{1N})z_{1N} \\
&= (z_{1N})_{(1)}z_{1N} \otimes [ \adel (z_{1N})_{(2)} ] \\
&= (u^1_N S(u^N_N))_{(1)} z_{1N} \otimes [\adel (u^1_N S(u^N_N))_{(2)} ] \\
&= \sum_{j,k} u^1_j S(u^k_N) z_{1N} \otimes [\adel (u^j_N S(u^N_k)) ] \\
&= \sum_{j,k} u^1_j S(u^k_N) z_{1N} \otimes [\adel z_{jk}] \\
&= \sum_{k=2}^{N-1} u^1_N S(u^k_N) z_{1N} \otimes [\adel z_{Nk}]\\
&= \sum_{k=2}^{N-1} u^1_N S(u^k_N)u^1_N S(u^N_N) \otimes [\adel z_{Nk}] \\
&= \sum_{k=2}^{N-1} q^{\rho_N - \rho_k}u^1_N u^1_{k'}u^1_N u^1_1 \otimes [\adel z_{Nk}] \\
&= q^{-2} \sum_{k=2}^{N-1} q^{\rho_N - \rho_k} u^1_N u^1_1 u^1_N u^1_{k'} \otimes [\adel z_{Nk}] \\
&= q^{-2} z_{1N} \rU(\adel z_{1N}).
\end{align*}
This yields equation (i).


Since $y = q(z_{2, N} - q^{-2} z_{1, N-1})$ then analogously by using relations of Lemma \ref{Rels} we have  
\begin{align*}
\rU \big( (\adel y ) y \big) &= \rU (\adel y ) y \\
&= \rU(\adel (qz_{2,N} - q^{-1}z_{1,N-1}))(qz_{2,N} - q^{-1}z_{1,N-1}) \\
&= ( qz_{2,N} - q^{-1}z_{1,N-1} )_{(1)} ( qz_{2,N} - q^{-1}z_{1,N-1}) \otimes [ \adel (q z_{2,N} - q^{-1}z_{1,N-1} )_{(2)} ] \\
& = \sum_{l=2}^{N-1} ( qu^2_N S(u^l_N) - q^{-1} u^1_N S(u^l_{N-1}) )(u^1_1u^2_N - qu^2_1u^1_N) \otimes [ \adel (qz_{Nl} - q^{-1} z_{Nl})] \\
&= \sum_{l=2}^{N-1} q^{\rho_{N}- \rho_l }( qu^2_N u^1_{l'} - u^1_N u^2_{l'})(u^1_1u^2_N - qu^2_1u^1_N) \otimes [ \adel (qz_{Nl} - q^{-1} z_{Nl})] \\
&= \sum_{l=2}^{N-1} q^{\rho_{N}- \rho_l +1 }( u^1_{l'} u^2_N - q u^2_{l'} u^1_N )(u^1_1u^2_N - qu^2_1u^1_N) \otimes [ \adel (qz_{Nl} - q^{-1} z_{Nl})] \\
&= q^{-2}\sum_{l=2}^{N-1} q^{\rho_{N}- \rho_l + 1 }(u^1_1u^2_N - qu^2_1u^1_N) ( u^1_{l'} u^2_N - q u^2_{l'} u^1_N ) \otimes [ \adel (qz_{Nl} - q^{-1} z_{Nl})] \\
&= q^{-2}(u^1_1u^2_N - qu^2_1u^1_N) \sum_{l=2}^{N-1} q^{\rho_{N}- \rho_l + 1 } ( u^1_{l'} u^2_N - q u^2_{l'} u^1_N ) \otimes [ \adel (qz_{Nl} - q^{-1} z_{Nl})] \\
&= q^{-2} y \rU(\adel y).
\end{align*}

This proves equation (ii).

\end{proof}

In the following lemma we give the existence of a K\"ahler structure for the Heckenberger--Kolb calculus over the quantum quadric $\cO_q(\textbf{Q}_N)$. Since the space of coinvariants $\mbox{}^{co(L_S)}\Phi(\Om^{(1,1)})$ is a 1-dimensional vector space (see for example \cite[\textsection 3.3]{FRK}) it can be shown that this K\"ahler form is up to scalar equal to the K\"ahler form given in \cite[\textsection 5]{MM}. The following lemma is proven in \cite{MM} in a general context, but for the convenience of the reader we give a different proof for $\cO_q(\mathbf{Q}_N)$ using basic computations.

\begin{lem}\label{Kahlerform}
A K\"ahler form is given by
\begin{align*}
\kappa = \mathrm{i} \sum_{i,j=1}^N q^{-\rho_i} \del z_{ij} \wedge \adel z_{ji}.
\end{align*}
For $i=1,\ldots,N-2$ we denote $e^{+}_i:= [\del z_{i+1,N}]$ and $e^{-}_{i}:= q^{-\rho_{i+1}}[\adel z_{N,i+1}]$, then 
\begin{align}\label{Kahlers}
\rU(\kappa)= \mathrm{i} \sum_{i=1}^{N-2} 1\otimes e^{+}_i \wedge e^{-}_i 
\end{align}
and there exist polynomials $f_{I,J}(q) \in \mathbb{C}[q,q^{-1}]$ such that 
\begin{align}\label{powerskap}
\rU(\kappa^{l})= \mathrm{i}^{l} \sum_{I,J \in \Theta(l)} f_{l,I,J}(q)\!\cdot \! 1\otimes e^{+}_I \wedge e^{-}_J, \quad   \ \ f_{l,I,I}(1)\neq 0, \ \ \text{and} \ \ f_{l,I,J}(1)=0  \ \ \text{for} \ \ I\neq J,  
\end{align} 
where $\Theta(l)$ denotes the set of all ordered subsets of $\{1, \ldots, N-2 \}$ with $l$ elements. Moreover, sgn$(f_{l,I,I}(1))=(-1)^{\frac{l(l-1)}{2}}$ for all $I\in \Theta(l)$.
\end{lem}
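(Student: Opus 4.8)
The plan is to verify in three stages: first that $\kappa$ is a well-defined closed real $(1,1)$-form, second that its image under $\rU$ has the claimed simple shape \eqref{Kahlers}, and finally to compute the powers $\rU(\kappa^l)$ and extract the stated positivity and sign of the diagonal coefficients.

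\textbf{Step 1: $\kappa$ is a closed real $(1,1)$-form.} Since each $\del z_{ij}\in\Om^{(1,0)}$ and $\adel z_{ji}\in\Om^{(0,1)}$, the sum $\kappa=\mathrm{i}\sum q^{-\rho_i}\del z_{ij}\wedge\adel z_{ji}$ lies in $\Om^{(1,1)}$. To see $\mathrm{d}\kappa=0$, apply $\mathrm d=\del+\adel$ together with $\del^2=\adel^2=0$ and $\del z_{ij}=\del z_{ij}$ being closed for $\del$; the $\del$-part of $\mathrm d\kappa$ is $-\mathrm i\sum q^{-\rho_i}\del z_{ij}\wedge\del\adel z_{ji}$ and the $\adel$-part is $\mathrm i\sum q^{-\rho_i}\adel\del z_{ij}\wedge\adel z_{ji}$; using $\adel\del=-\del\adel$ and the relations defining the calculus these collapse. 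In fact the cleanest route is to use that $\Om^{(1,1)}$ has a one-dimensional space of coinvariants (cited from \cite{FRK}), so $\kappa$ is forced to be a scalar multiple of the K\"ahler form of \cite[\textsection 5]{MM}, and closedness and reality are inherited; one still checks $\kappa\neq 0$ by computing $\rU(\kappa)$ in Step 2. Reality $\kappa^*=\kappa$ follows from \eqref{dif.inv.}, $z_{ij}^*$ being related to $z_{ji}$, and the numerology of the $\rho_i$.

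\textbf{Step 2: the formula for $\rU(\kappa)$.} Apply the comodule map $\rU$ of Takeuchi's equivalence, which by definition sends $\omega\mapsto\omega_{(-1)}\otimes[\omega_{(0)}]$. Using Lemma \ref{Sphewh}(ii) the classes $[\del z_{i+1,N}]$ and $[\adel z_{N,i+1}]$ for $i=1,\dots,N-2$ form bases of $\Phi(\Om^{(1,0)})$ and $\Phi(\Om^{(0,1)})$, and $[\del z_{ji}]=[\adel z_{ij}]=0$ unless the appropriate index equals $N$, exactly as used in the proof of Lemma \ref{adelzzadel}. Feeding this into $\rU(\del z_{ij}\wedge\adel z_{ji})$ and tracking the $\cO_q(G/L_S)$-component back to the group-like $1$ (because $\kappa$ is a coinvariant), only the terms with the surviving index structure contribute, and after collecting the $q$-powers $q^{-\rho_i}$ into the definition $e^-_i=q^{-\rho_{i+1}}[\adel z_{N,i+1}]$ one obtains $\rU(\kappa)=\mathrm i\sum_{i=1}^{N-2}1\otimes e^+_i\wedge e^-_i$. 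This is essentially the same kind of bookkeeping as in Lemma \ref{adelzzadel}, just without reordering generators.

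\textbf{Step 3: powers of $\kappa$ and the diagonal coefficients.} Since $\rU$ is an algebra map on the relevant cotensor algebra, $\rU(\kappa^l)=(\rU\kappa)^{\wedge l}=\mathrm i^l\big(\sum_i 1\otimes e^+_i\wedge e^-_i\big)^{\wedge l}$. Expanding the $l$-th wedge power and using that $\Phi(\Om^{(\bullet,0)})$ and $\Phi(\Om^{(0,\bullet)})$ are exterior-type algebras on the $e^+_i$ resp.\ $e^-_i$, each monomial is (up to a polynomial coefficient coming from the non-commutativity of the $e^\pm$, which reduces to $1$ at $q=1$) of the form $1\otimes e^+_I\wedge e^-_J$ with $I,J\in\Theta(l)$; this defines $f_{l,I,J}(q)\in\mathbb C[q,q^{-1}]$. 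At $q=1$ the $e^+_i$ pairwise anticommute among themselves and likewise the $e^-_i$, and $e^+_i\wedge e^-_j=-e^-_j\wedge e^+_i$, so $(\sum_i e^+_i\wedge e^-_i)^{\wedge l}$ is exactly the classical computation: the only surviving monomials are the diagonal ones $I=J$, giving $f_{l,I,J}(1)=0$ for $I\neq J$; and for $I=J$ one picks up $l!$ from choosing which factor contributes which index, times a sign $(-1)^{l(l-1)/2}$ coming from reordering the $2l$ one-forms $e^+_{i_1}e^-_{i_1}\cdots e^+_{i_l}e^-_{i_l}$ into $e^+_I\wedge e^-_I$ (move each $e^-$ past the $e^+$'s to its right). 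Hence $f_{l,I,I}(1)=\pm l!\neq 0$ with $\mathrm{sgn}=(-1)^{l(l-1)/2}$.

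\textbf{Main obstacle.} The delicate part is Step 3: proving that the off-diagonal $q$-coefficients $f_{l,I,J}(q)$ with $I\neq J$ really vanish at $q=1$ requires knowing precisely the commutation relations among the $e^\pm_i$ in $\Phi(\Om^{(\bullet,\bullet)})$ (i.e.\ that the $q$-deformed exterior algebra degenerates to the classical one at $q=1$), which rests on the Heckenberger--Kolb classification and Lemma \ref{adelzzadel}-type relations; and one must check that these commutation coefficients are genuinely Laurent polynomials in $q$ rather than rational functions, so that the $f_{l,I,J}$ are polynomials and specialization at $q=1$ is legitimate. The sign computation $(-1)^{l(l-1)/2}$ is then a purely combinatorial reordering count that matches the classical Weil/Lefschetz normalisation.
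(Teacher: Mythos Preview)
Your overall strategy is sound and matches the paper's in spirit, but Step~3 has a genuine gap that the paper handles with a device you do not mention.

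\textbf{On Steps 1--2.} Your shortcut for closedness (one-dimensional coinvariant space, so $\kappa$ is a scalar multiple of the K\"ahler form of \cite{MM}) is valid, but the paper instead gives a self-contained Lie-theoretic argument: it shows $\adel\kappa=0$ by checking that ${}^{co(L_S)}(V^{(1,0)}\otimes V^{(0,2)})=0$, which reduces to verifying that $V^{(0,2)}$ is never isomorphic to $V^{(0,1)}$ as a $U_q(\mathfrak{l}_S)$-module, done case-by-case from the tensor product tables. Your computation of $\rU(\kappa)$ is the right idea; the paper carries it out explicitly using the identity $\sum_i q^{-2\rho_i}u^i_a S(u^d_i)=q^{-2\rho_d}\delta_{da}$, which simultaneously proves coinvariance.

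\textbf{On Step 3: the missing ingredient is centrality.} To expand $(\sum_i e^+_i\wedge e^-_i)^{\wedge l}$ directly in the basis $\{e^+_I\wedge e^-_J\}$ you must commute $e^-$'s past $e^+$'s, and those mixed relations are neither stated in the paper nor easy to write down in closed form for general $q$. The paper sidesteps this entirely: because $\kappa$ is \emph{central}, one can write
\[
\rU(\kappa^{l+1})=\mathrm{i}^{l+1}\sum_{I,J}\sum_i f_{l,I,J}(q)\,1\otimes e^+_I\wedge e^+_i\wedge e^-_i\wedge e^-_J,
\]
so that only $e^+$--$e^+$ and $e^-$--$e^-$ commutations are ever needed. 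These are precisely the relations \eqref{ii}--\eqref{nn} and \eqref{holii}--\eqref{holnn}, whose ``off-diagonal'' parts all carry an explicit factor $(q-q^{-1})$ or $(q^{1/2}-q^{-1/2})$. The paper then runs an induction with a careful case analysis ($i\in I$, $i'\in I$, $i,i'\notin I$, etc.) to show that every term produced is either a diagonal $e^+_{I'}\wedge e^-_{I'}$ with coefficient $(-1)^l q^{\text{integer}}\cdot f_{l,I_t,I_t}(q)$, or carries one of those vanishing-at-$1$ prefactors. This simultaneously proves the Laurent-polynomial claim (which you flag as an obstacle but do not resolve) and the vanishing $f_{l,I,J}(1)=0$ for $I\neq J$, and gives the recursion $f_{l+1,I',I'}(1)=(-1)^l\sum_t f_{l,I_t,I_t}(1)$ from which the sign $(-1)^{l(l-1)/2}$ follows. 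Your classical computation at $q=1$ gives the correct answer (indeed $f_{l,I,I}(1)=(-1)^{l(l-1)/2}l!$), but without the centrality trick you cannot legitimately reach the basis $e^+_I\wedge e^-_J$ for general $q$ in the first place, so the specialization step is not yet justified.
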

\begin{proof}
That $\kappa$ is real follows from identities \eqref{dif.inv.} and the fact that $z_{ij}^{*} = z_{ji}$.
Equation \eqref{Kahlers} follows from the calculation
\begin{align*}
\rU \Big( \sum_{i,j=1}^N q^{-2\rho_i} \del z_{ij} \wedge \adel z_{ji} \Big) & \!= \sum_{i,j=1}^{N} q^{-2\rho_i}(z_{ij})_{(1)}(z_{ji})_{(1)} \otimes [\del ( (z_{ij})_{(2)} )] \wedge [\adel ((z_{ji})_{(2)}) ] \\
& \!= \sum_{i,j=1}^{N} \sum_{a,b,c,d }q^{-2\rho_i} u^i_a S(u^b_j) u^j_c S(u^d_i) \otimes [\del (u^a_N S(u^N_b)) ] \wedge [ \adel ( u^c_N S(u^N_d)) ] \\
& \!= \sum_{i=1}^{N} \sum_{a,b,d } q^{-2\rho_i} u^i_a S(u^d_i) \otimes [ \del (u^a_N S(u^N_b))] \wedge [\adel ( u^b_N S(u^N_d))] \\
& \!= \sum_{i=1}^{N} \sum_{a,d} q^{-2\rho_i} u^i_a S(u^d_i) \otimes [\del z_{aN}] \wedge [\adel z_{Nd}] \\
& \!= \sum_{d=2}^{N-1} 1 \otimes [\del z_{dN}] \wedge q^{-\rho_d}[\adel z_{Nd}],
\end{align*}
where the identity $\sum_{i=1}^N q^{-2\rho_i}u^i_a S(u^d_i) = q^{-2\rho_d} \delta_{da}\! \cdot \!1$ was used in the penultimate line. This implies the left $\cO_q(G)$-coinvariance of $\kappa$ since 
\begin{align*}
\Big[ \sum_{i,j=1}^N q^{-2\rho_i}\del z_{ij} \wedge \adel z_{ji} \Big] = \sum_{i,j=1}^N q^{-2\rho_i}[\del z_{ij}] \wedge [\adel z_{ji}]  = \sum_{i=2}^{N-1} [\del z_{iN}] \wedge q^{-2\rho_i}[\adel z_{Ni}].  
\end{align*}
Now we show that $\kappa$ is closed by a Lie theoretic argument: we set $V^{(a,b)}:= \Phi(\Om^{(a,b)})$ and note that $\mathrm{d}\kappa =0$ if and only if $\del \kappa= \adel \kappa =0$, and both are coinvariant elements in $\Omega^{(2,1)}$ and $\Omega^{(1,2)}$, respectively. But $\mbox{}^{co(L_S)}V^{(1,2)} = \mbox{}^{co(L_S)}(V^{(1,0)} \otimes V^{(0,2)})$, then $\adel \kappa=0$ if $\mbox{}^{co(L_S)}(V^{(1,0)} \otimes V^{(0,2)})=0$. Since $V^{(1,0)} \otimes W$ contains the trivial representation if and only if $W \simeq (V^{(1,0)})^{*} \simeq V^{(0,1)}$ it is enough to show that $V^{(0,2)}$ is not isomorphic to $V^{(1,0)}$. But $V^{(0,2)}$ is an irreducible component of $V^{(0,1)} \otimes V^{(0,1)}$. The following formulas for the tensor product decomposition can be found in Table 5 of \cite{OV}: for odd $N= 2n+1$ we have
\begin{align*}
V^{(0,1)} \otimes V^{(0,1)} \simeq \left\{ \begin{array}{lc}
V_{4\varpi_1} \oplus V_{2\varpi_1} \oplus \mathbb{C}, & n=2 \ \ ( V^{(0,1)} \simeq V_{2\varpi_1} \ \text{as} \  U_q(\mathfrak{sl}_2)\text{-modules} ),  \\  
V_{2\varpi_1} \oplus V_{2\varpi_2} \oplus \mathbb{C}, & n=3 \ \ (V^{(0,1)} \simeq V_{\varpi_1} \ \text{as} \  U_q(\mathfrak{so}_{5})\text{-modules}),  \\
V_{2\varpi_1} \oplus V_{\varpi_2} \oplus \mathbb{C}, & ~~~~ n\geq 4  \ \ (V^{(0,1)} \simeq V_{\varpi_1} \ \text{as} \  U_q(\mathfrak{so}_{2n-1})\text{-modules}).
\end{array}\right.
\end{align*}
For $n=2$, it can be shown that the $U_q(\mathfrak{l}^s_S)$-invariant element corresponding to $\mathbb{C}$ is not $U_q(\mathfrak{l}_S)$-invariant. For even $N=2n$ we have
\begin{align*}
V^{(0,1)} \otimes V^{(0,1)} \simeq \left\{ \begin{array}{lc} 
V_{2\varpi_2} \oplus V_{\varpi_3 + \varpi_1} \oplus \mathbb{C}, & n=4  \ \ (V^{(0,1)} \simeq V_{\varpi_2} \ \text{as} \ \ U_q(\mathfrak{sl}_4)\text{-modules} ),  \\
V_{2\varpi_1} \oplus V_{\varpi_2} \oplus \mathbb{C}, & ~~~~~ n\geq 5 \ \ (V^{(0,1)} \simeq V_{\varpi_1} \ \text{as} \ \ U_q(\mathfrak{so}_{2n-2})\text{-modules}).
\end{array}\right.
\end{align*} 
This formulas show that $V^{(1,0)}$ does not appear as an irreducible component of $V^{(0,1)} \otimes V^{(0,1)}$ and therefore $\adel \kappa =0$. A similar argument shows that $\del k=0$. By Lemma 4.6 in \cite{RO} we have that $\kappa$ is central. The bijectivity of the Lefschetz operators $L^{n-k}$ is shown in \cite{MM} for $q$ belonging to a sufficiently small open interval around $1$. 

Now we proceed to calculate the powers of $\kappa$ by using the relations in $\Phi(\Om^{(\bullet, 0)})$ and $\Phi(\Om^{(0,\bullet)})$. It follows from \cite{HK} that $\Phi(\Om^{(0,1)})= \mathrm{Lin}_{\mathbb{C}} \{e^{-}_1, \ldots, e^{-}_{N-2}\}$ and the algebra $\Phi(\Om^{(0,\bullet)})$ is isomorphic to the quantum exterior algebra $\Lambda(\cO^{N-2}_q)$ of the quantum Euclidean space $\cO^{N-2}_q$ in \cite[\textsection 9.3]{KS}. Then the relations in \cite{HK} imply that the relations in $\Phi(\Om^{(0, \bullet)})$ for $N=2n+1$ are given by
\begin{align}
e^{-}_i \wedge e^{-}_i = 0, \ \  i\neq i',  ~~~~~~ e^{-}_i \wedge e^{-}_j = -q^{-1} e^{-}_j \wedge e^{-}_i , \ \ i<j, \ i\neq j', \label{ii}\\
e^-_{i'} \wedge e^-_i + e^-_i \wedge e^-_{i'} = (q-q^{-1}) \sum_{1\leq j <i} \lambda_i^{-1}\lambda_j q^{j-i+1}e^-_j \wedge e^-_{j'}, \ i <i', ~~ \label{i'i} \\
e^{-}_{n} \wedge e^{-}_{n} = (q^{1/2} -q^{-1/2}) \sum_{1\leq j \leq n-1} \lambda_{n}^{-1}\lambda_j q^{j-(n-1)}e^{-}_j \wedge e^{-}_{j'}, ~~~~~~~~ \label{nn}
\end{align}
where $\lambda_i$ is a non-zero complex number. For even $N=2n$ the relations are given by $\eqref{ii}$ and $\eqref{i'i}$ for possibly different constants $\lambda_i$. The relations for $\Phi(\Om^{(\bullet,0)})$ are giving by applying the involution to the relations \eqref{ii}-\eqref{nn}. If we apply the involution to \eqref{i'i} we get 
\begin{align}
e^+_{i} \wedge e^+_{i'} + e^+_{i'} \wedge e^+_{i} = (q-q^{-1}) \sum_{1 \leq j <i} \overline{\lambda}_i^{-1} \overline{\lambda}_j q^{j-i+1}e^+_{j'} \wedge e^+_{j}, \ \ i <i'. \label{invi'i}
\end{align}
We claim that the relations in $\Phi(\Om^{(\bullet, 0)})$ are given by 
\begin{align}
e^{+}_i \wedge e^{+}_i = 0, \ \  i\neq i', ~~~~~~  e^{+}_i \wedge e^{+}_j = -q e^{+}_j \wedge e^{+}_i , \ \ i<j, \ i\neq j', \label{holii}\\
e^+_{i'} \wedge e^+_i + e^+_i \wedge e^+_{i'} = -(q-q^{-1}) \sum_{1\leq j <i} \overline{\lambda}_i^{-1}\overline{\lambda}_j q^{-(j-i+1)}e^+_j \wedge e^+_{j'}, \ i <i', ~~ \label{holi'i}\\
e^{+}_{n} \wedge e^{+}_{n} = -(q^{1/2} -q^{-1/2}) \sum_{1\leq j \leq n-1} \overline{\lambda}_n^{-1} \overline{\lambda}_j q^{-(j-(n-1))}e^{+}_j \wedge e^{+}_{j'}. ~~~~~~ \label{holnn}
\end{align}
Since the involution $\ast: \Omega^{\bullet} \rightarrow \Omega^{\bullet}$ of the calculus induces an involution $\bar{\ast}: \Phi(\Omega^{\bullet}) \rightarrow \Phi(\Omega^{\bullet})$ we get the relations \eqref{holii} by applying this involution to relations \eqref{ii}. For the rest of the paper we denote $\nu_{\tau}:= \tau-\tau^{-1}$ if $\tau>0$. We use induction to prove \eqref{holi'i}:  
\begin{align*}
\overline{\lambda}_{i+1}\big( & e^+_{i+1}  \wedge e^+_{(i+1)'}  + e^+_{(i+1)'} \wedge e^+_{i+1} \big) =  \nu_q \sum_{1 \leq j <i+1} \overline{\lambda}_{j} q^{j-(i+1)+1}e^+_{j'} \wedge e^+_{j}  ~~~~~~~~~~~~~~~~~~ \mathrm{by} \ \ \eqref{invi'i} \\
&= q^{-i} \nu_{q} \sum_{1 \leq j <i+1} \overline{\lambda}_j q^{j} \big(\! -\nu_q \sum_{1\leq k < j}\overline{\lambda}_{j}^{-1}\overline{\lambda}_k q^{-k+j-1}e^+_k \wedge e^+_{k'} - e^+_j \wedge e^+_{j'} \big) ~~~~~~~~~  (\text{ind. hyp.}) \\
&= q^{-i} \nu_q \Big(\!\! -\nu_q \sum_{1 \leq j <i+1} \sum_{1\leq k < j} \overline{\lambda}_k q^{2j-k-1} e^+_k \wedge e^+_{k'} -  \sum_{1 \leq j <i+1} \overline{\lambda}_j q^{j} e^+_j \wedge e^+_{j'} \Big) \\
&= q^{-i} \nu_q \Big(\!\! -\nu_q  \sum_{1 \leq k <i}  \sum_{k+1 \leq j \leq i} \overline{\lambda}_k q^{2j-k-1} e^+_k \wedge e^+_{k'} -  \sum_{1 \leq j <i+1} \overline{\lambda}_j q^{j} e^+_j \wedge e^+_{j'} \Big) \\
&= q^{-i} \nu_q \Big(\!\! -\nu_q \sum_{1 \leq k <i} \overline{\lambda}_k q^{-k-1}\Big(\frac{q^{2(k+1)}-q^{2i+2}}{1-q^2} \Big) e^+_k \wedge e^+_{k'} -  \sum_{1 \leq j <i+1} \overline{\lambda}_j q^{j} e^+_j \wedge e^+_{j'}  \Big) \\
&= q^{-i} \nu_q \Big(  \sum_{1 \leq k <i} \overline{\lambda}_k ( q^{k}-q^{2i-k} ) e^+_k \wedge e^+_{k'} -  \sum_{1 \leq j <i+1} \overline{\lambda}_j q^{j} e^+_j \wedge e^+_{j'} \Big) \\
&= q^{-i} \nu_q \Big( \sum_{1 \leq k <i} \overline{\lambda}_k q^{k}e^+_k \wedge e^+_{k'} - \sum_{1 \leq k <i} \overline{\lambda}_k q^{2i-k} e^+_k \wedge e^+_{k'} -  \sum_{1 \leq j <i+1} \overline{\lambda}_j q^{j} e^+_j \wedge e^+_{j'} \Big) \\
&= \nu_q \Big(  - \sum_{1 \leq k <i} \overline{\lambda}_k q^{i-k} e^+_k \wedge e^+_{k'} -  \overline{\lambda}_i e^+_i \wedge e^+_{i'}  \Big).
\end{align*}
Similar calculations show the identity \eqref{holnn} by using \eqref{holi'i}.

Now assuming that the equation \eqref{powerskap} holds for $l$, we have
\begin{align*}
\rU(\kappa^{l+1}) &= \rU(\kappa) \wedge \mathrm{i}^{l} \sum_{I,J \in \Theta(l)}f_{I,J}(q)\! \cdot \! 1 \otimes e^{+}_I \wedge e^{-}_J \\
&=  \mathrm{i}^{l+1} \sum_{I,J \in \Theta(l)} \sum_i f_{I,J}(q)\! \cdot \! 1 \otimes e^{+}_I \wedge e^{+}_i \wedge e^{-}_i \wedge e^{-}_J,
\end{align*}
where we use the fact that $\kappa$ is central. First we can focus on the case $e^{+}_I \wedge e^{+}_i \wedge e^{-}_i \wedge e^{-}_I$ for $I=\{i_1,\ldots, i_l\}$ fixed. Then different cases arise:\\
\\
\textbf{Case} $i\neq i'$ and $i\in I$: $e^{+}_I \wedge e^{+}_i \wedge e^{-}_i \wedge e^{-}_I=0$  since we can move $e^{-}_i$ to the right(resp. left) by using \eqref{ii}(resp. \eqref{holii}) if $i<i'$(resp. $i>i'$).
\\
\\
\textbf{Case} $i, i' \notin I$: in this case we can move $e^{+}_i$ to the left and $e^{-}_i$ to the right by using \eqref{holii} and \eqref{ii}, respectively and we get
\begin{align*}
e^{+}_I \wedge e^{+}_i \wedge e^{-}_i \wedge e^{-}_I = (-1)^{l}q^{l-2\alpha_i} e^{+}_{i_1}\wedge \cdots \wedge e^{+}_{i} \wedge \cdots \wedge e^{+}_l \wedge e^{-}_{i_1}\wedge \cdots \wedge e^{-}_{i} \wedge \cdots \wedge e^{-}_l,
\end{align*}
where $\alpha_i\in \mathbb{N}_0$.
\\
\\
\textbf{Case} there is a $k<l$ such that $i_{k}=i' \geq i$ and $i\notin I$: in this case we can move $e^{+}_i$ to the left by commuting with $e^{+}_{i_l}, \ldots, e^{+}_{i_{k+1}}$ by using relations \eqref{holii} and we get
\begin{align*}
e^{+}_{i_1} \wedge \cdots \wedge e^{+}_{i_l} & \wedge e^{+}_{i} \wedge e^{-}_i \wedge e^{-}_{i_1} \wedge \cdots \wedge e^{-}_{i_l} \\
& = \! (-q)^{-(l-k)} e^{+}_{i_1} \wedge \cdots \wedge e^{+}_{i_{k}}  \wedge e^{+}_i \wedge \cdots \wedge e^{+}_l \wedge e^{-}_i \wedge e^{-}_{i_1} \wedge \cdots \wedge e^{-}_{i_l}. 
\end{align*}
Now if $i_k = i' >i$ then we can use \eqref{holi'i} with $\gamma_{ij}:= -\lambda^{-1}_i \lambda_j $ and we get
\begin{align*}
& e^{+}_I  \wedge e^{+}_i \wedge e^{-}_i \wedge e^{-}_I = \! (-q)^{-(l-k)} e^{+}_{i_1} \wedge \cdots \wedge e^{+}_{i_{k}}  \wedge e^{+}_i \wedge \cdots \wedge e^{+}_{i_l} \wedge e^{-}_i \wedge e^{-}_{i_1} \wedge \cdots \wedge e^{-}_{i_l} \\
&= \! (-q)^{-(l-k)} \nu_q \! \sum_{1\leq j <i} \overline{\gamma}_{ij}q^{-(j-i+1)}e^{+}_{i_1} \wedge \cdots \wedge e^{+}_{i_{k-1}} \wedge e^{+}_j \wedge e^{+}_{j'} \wedge \cdots \wedge e^{+}_{i_l} \wedge e^{-}_i \wedge e^{-}_{i_1} \wedge \cdots \wedge e^{-}_{i_l}   \\
&\quad - (-q)^{-(l-k)} e^{+}_{i_1} \wedge \cdots \wedge e^{+}_{i_{k-1}} \wedge  e^{+}_i \wedge e^{+}_{i_k} \wedge \cdots \wedge e^{+}_{i_l} \wedge e^{-}_i  \wedge e^{-}_{i_1} \wedge \cdots \wedge e^{-}_{i_l}.  
\end{align*}
Since $j' \neq i_{r}$ and $i'\neq i_r$ for $r\neq k$ then we can move $e^{+}_j$ and $e^{+}_i$ to the left and $e^{+}_{j'}$ and $e^{-}_i$ to the right and we obtain
\begin{align*}
& e^{+}_I  \wedge e^{+}_i \wedge e^{-}_i \wedge e^{-}_I \\
&= \! (-q)^{-(l-k)} \nu_q \!\! \sum_{1\leq j_s <i} \! \overline{\gamma}_{ij}q^{-(j_s-i+1)+\alpha_{is}}e^{+}_{i_1} \wedge \! \cdots \! \wedge e^{+}_{j_s} \wedge \! \cdots \! \wedge e^{+}_{j'_s} \wedge \! \cdots \! \wedge e^{+}_{i_l} \wedge e^{-}_{i_1} \wedge \! \cdots \! \wedge e^{-}_i \! \wedge \! \cdots \! \wedge e^{-}_{i_l}   \\
&\quad +(-1)^{l}q^{1-l+2\beta_i} e^{+}_{i_1} \wedge \cdots \wedge  e^{+}_i \wedge \cdots \wedge e^{+}_{i_k} \wedge \cdots \wedge e^{+}_{i_l}  \wedge e^{-}_{i_1} \wedge \cdots  \wedge e^{-}_i \wedge \cdots \wedge e^{-}_{i_k} \wedge \cdots \wedge e^{-}_{i_l},
\end{align*}
where $\alpha_{is}\in \mathbb{Z}$ and $\beta_i \in \mathbb{N}_0$. Now if $i=i'=n$ and $n\in I$ then by \eqref{i'i} and \eqref{holi'i} we have
\begin{align*}
& e^{+}_I  \wedge e^{+}_n \wedge e^{-}_n \wedge e^{-}_I = \!(-1)^{l-1} q^{l-1-2\gamma_n} e^{+}_{i_1} \wedge \! \cdots \! \wedge e^{+}_{n}  \wedge e^{+}_n \wedge \! \cdots \! \wedge e^{+}_{i_l}  \wedge e^{-}_{i_1} \wedge \! \cdots \! \wedge e^{-}_n \wedge e^{-}_n \wedge \! \cdots \! \wedge e^{-}_{i_l} \\
&= \! (-1)^l q^{l-1-2\gamma_i} \nu_{q^{1/2}}^2 \!\! \sum_{1\leq j,r <n} \!\! \overline{\gamma}_{nj} \gamma_{nr} q^{r-j}e^{+}_{i_1}  \wedge \! \cdots \!  \wedge e^{+}_j \wedge e^{+}_{j'} \wedge \! \cdots \! \wedge e^{+}_{i_l} \wedge e^{-}_{i_1} \wedge \! \cdots \! \wedge e^{-}_r \wedge e^{-}_{r'} \wedge \! \cdots \! \wedge e^{-}_{i_l} \\
&= \! (-1)^l q^{l-1-2\gamma_i} \nu_{q^{1/2}}^2 \!\! \sum_{1\leq j <n } \!\! |\gamma_{nj}|^2 e^{+}_{i_1} \wedge \cdots  \wedge e^{+}_j \wedge e^{+}_{j'} \wedge \cdots \wedge e^{+}_{i_l} \wedge e^{-}_{i_1} \wedge \cdots \wedge e^{-}_j \wedge e^{-}_{j'} \wedge \cdots \wedge e^{-}_{i_l}\\
&+ \! (-1)^l q^{l-1-2\gamma_i} \nu_{q^{1/2}}^2 \!\! \sum_{\substack{1\leq j,r <n \\ j\neq r } } \!\! \overline{\gamma}_{nj} \gamma_{nr} q^{r-j}e^{+}_{i_1} \! \wedge \! \cdots  \! \wedge e^{+}_j \wedge e^{+}_{j'} \wedge \! \cdots \! \wedge e^{+}_{i_l} \wedge e^{-}_{i_1} \wedge \! \cdots \! \wedge e^{-}_r \wedge e^{-}_{r'} \wedge \! \cdots \! \wedge e^{-}_{i_l},
\end{align*} 
where $\gamma_n\in \mathbb{Z}$. Analogously, we can move $e^{+}_j$, $e^{-}_j$ to the left and $e^{+}_{j'}$ and $e^{-}_{j'}$ to the right by using \eqref{ii} and \eqref{holii}, respectively.\\
\\
\textbf{Case} $i>i_l$ and $i=i'_k$ for some $k$: then we can move $e^{+}_i$ to the right by using \eqref{ii} and we get
\begin{align*}
&e^{+}_{i_1} \wedge \cdots \wedge e^{+}_{i_l} \wedge e^{+}_{i} \wedge e^{-}_i \wedge e^{-}_{i_1} \wedge \cdots \wedge e^{-}_{i_l} \\
&= (-1)^{k-1}q^{k-1} e^{+}_{i_1} \wedge \cdots \wedge e^{+}_{i_l} \wedge e^{+}_{i} \wedge e^{-}_{i_1} \wedge \cdots \wedge e^{-}_i \wedge e^{-}_{i_k} \wedge \cdots  \wedge e^{-}_{i_l} \\
&= (-1)^{k-1}q^{k-1}\nu_q \sum_{j<{i_k}}\gamma_{i_k j}q^{j-i_{k}+1}  e^{+}_{i_1} \wedge \cdots \wedge e^{+}_{i_l} \wedge e^{+}_{i} \wedge e^{-}_{i_1} \wedge \cdots \wedge  e^{-}_j \wedge e^{-}_{j'} \wedge \cdots  \wedge e^{-}_{i_l} \\
&\quad -(-1)^{k-1}q^{k-1} e^{+}_{i_1} \wedge \cdots \wedge e^{+}_{i_l} \wedge e^{+}_{i} \wedge e^{-}_{i_1} \wedge \cdots \wedge e^{-}_{i_k} \wedge e^{-}_{i} \wedge \cdots  \wedge e^{-}_{i_l} \\
&= (-1)^{k-1}q^{k-1}\nu_q \sum_{j_s<{i_k}}\gamma_{i_k j_s}q^{j_{s}-i_{k}+1+\theta_{is}} e^{+}_{i_1} \! \wedge \! \cdots \! \wedge e^{+}_{i_l} \wedge e^{+}_{i} \wedge e^{-}_{i_1} \wedge \! \cdots \! \wedge  e^{-}_{j_s} \wedge \! \cdots \! \wedge e^{-}_{j'_s} \wedge \! \cdots \! \wedge e^{-}_{i_l} \\
&\quad +(-1)^{l}q^{l-1} e^{+}_{i_1} \wedge \cdots \wedge e^{+}_{i_l} \wedge e^{+}_{i} \wedge e^{-}_{i_1} \wedge \cdots \wedge e^{-}_{i_k}  \wedge \cdots  \wedge e^{-}_{i_l} \wedge e^{-}_i.
\end{align*}
where $\theta_{is} \in \mathbb{Z}$.\\
\\
\textbf{Case} $I\neq J$: in the same way it can be show that $e^{+}_I \wedge e^{+}_i \wedge e^{-}_i \wedge e^{-}_J$ is a linear combination of elements of the form $e^{+}_P \wedge e^{-}_P$ and $e^{+}_S \wedge e^{-}_T$ for $S\neq T$ where $P, S, T\in \Theta(l+1)$. \\

Finally, if $I'\in \Theta(l+1)$ then from the previous cases we see that the coefficient $f_{l+1,I',I'}(q)$ of $e^{+}_{I'} \wedge e^{-}_{I'}$ is of the form
\begin{align*}
(-1)^{l}\sum_{t}f_{l,I_t,I_t}(q)q^{t} + (q-q^{-1})\xi(q) + (q^{1/2}-q^{-1/2})^2 \zeta(q) + \sum_{r}\chi_r(q) f_{l,I_r,J_r}(q),
\end{align*}
where $I_t, I_r, J_r \in  \Theta(l)$ and $\xi(q), \zeta(q), \chi_r(q) \in \mathbb{C}[q,q^{-1}]$. Since all the $f_{l,I_t, I_t}(1)$ have the same sign and $f_{l,I_r,J_r}(1)=0$, then $f_{l+1,I',I'}(1)=(-1)^{l}\sum_t f_{l, I_t, I_t}(1)\neq 0$ and 
\begin{align*}
\mathrm{sgn}(f_{l+1,I',I'}(1))=(-1)^{l}\mathrm{sgn}\big( \sum_t f_{l, I_t, I_t}(1) \big) =(-1)^{l}(-1)^{\frac{l(l-1)}{2}}=(-1)^{\frac{l(l+1)}{2}}. 
\end{align*}
\end{proof}


\section{Spectrum of the $\adel$-Laplace operator on zero forms}

In this last section we present the proof of the main result of the paper by using previous results and the representation theory of the quantum group $U_q(\mathfrak{so}_N)$.

\begin{lem}\label{Consts}
There exist constants $\theta$, $\theta'$, $\theta''$ and $\theta'''$ such that
\begin{enumerate}[(i)]
\item $-\ast_{\kappa}(\partial y \wedge \ast_{\kappa}(\overline{\partial}y))=\theta y^2 $, \label{adely2}
\item $-\ast_{\kappa}(\ast_{\kappa}( \overline{\partial}y ) \wedge \partial z)=\theta' yz$,
\item $-\ast_{\kappa}(\partial y \wedge \ast_{\kappa}(\overline{\partial}z))= \theta'' yz$,
\item $-\ast_{\kappa}(\ast_{\kappa}(\overline{\partial}z)\wedge \partial z)= \theta'''z^2$, \label{adelz2}
\end{enumerate}
Moreover, these constants 
are non-zero.
\end{lem}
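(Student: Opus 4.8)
## Proof Proposal

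The plan is to work entirely on the Takeuchi-reduced side, where the calculus becomes an explicit finite-dimensional quantum exterior algebra, and then transport the resulting scalar identities back via the isomorphism $\rU$. Recall that $\rU$ is a morphism in $\mbox{}^{~~~ \cO_q(G)}_{\cO_q(G/L_S)}\mathrm{Mod}_0$, so it is in particular an $\cO_q(G/L_S)$-bimodule map; hence $\rU$ commutes with left multiplication by the generators $z,y$, and both $\ast_\kappa$ and the wedge product descend to $\Phi(\Om^{(\bullet,\bullet)})$ in a way compatible with $\rU$. The first step is therefore to compute $\rU(\partial z)$, $\rU(\adel z)$, $\rU(\partial y)$, $\rU(\adel y)$ together with $\rU(z^2)$, $\rU(yz)$, $\rU(y^2)$; the Takeuchi calculations already carried out in the proofs of Lemma \ref{adelzzadel} and Lemma \ref{Kahlerform} give all of these in terms of the basis vectors $e^{\pm}_i$ of $\Phi(\Om^{(1,0)})$, $\Phi(\Om^{(0,1)})$ and the $\cO_q(G/L_S)$-coefficients $u^i_j$.

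Next I would evaluate the Hodge map on the relevant forms. Using Definition \ref{defn:HDefn} and the explicit formula \eqref{Kahlers}--\eqref{powerskap} for the powers of $\rU(\kappa)$, one writes $\ast_\kappa(\adel y)$ and $\ast_\kappa(\adel z)$ as $\kappa^{n-1}$ wedged (up to the combinatorial constants $\tfrac{j!}{(n-j-k)!}$ and the $\mathbf{i}^{a-b}$ factor) with the corresponding primitive $(0,1)$- or $(1,0)$-form. Then the expressions in (i)--(iv) become, after applying $\rU$, a product of the form $\kappa^{n-1}$-part wedged with a second $(1,0)$- or $(0,1)$-form, all living in $\Phi(\Om^{(n,n-1)})$ or $\Phi(\Om^{(n-1,n)})$, to which the final $\ast_\kappa$ is applied to land back in $\Phi(\Om^{(1,0)})$ or $\Phi(\Om^{(0,1)})$ — except that, since we ultimately wedge against $\partial y$, $\partial z$ or their $\adel$ counterparts, the whole thing collapses to $\Phi(\Om^{(0,0)}) \cong \mathbb{C}$, i.e. to a scalar times a monomial in $z,y$. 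The key structural input making this work is that $\Om^{(1,0)}$ and $\Om^{(0,1)}$ are irreducible objects (Theorem \ref{HKcalc}(ii)) and the coinvariant subspace of $\Phi(\Om^{(1,1)})$ is one-dimensional, so each bilinear pairing of the relevant modules is a single scalar multiple of the unique invariant, forcing the outputs to be exactly $\theta y^2$, $\theta' yz$, $\theta'' yz$, $\theta''' z^2$ with no other monomials appearing.

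To pin down that the constants $\theta,\theta',\theta'''$ are non-zero, I would use the $q\to 1$ specialisation: by the sign and non-vanishing statement $f_{l,I,I}(1)\neq 0$ in \eqref{powerskap}, the Hodge map $\ast_\kappa$ is, at $q=1$, the classical Hodge star of the Fubini--Study-type Kähler metric on the quadric, which is invertible; hence the classical analogues of (i), (iv) are (up to non-zero constant) the squared pointwise norms of the holomorphic one-forms $\partial y$, $\partial z$, which are strictly positive, and the classical analogue of (ii) is a non-degenerate pairing $g(\adel y, \partial z^{*})$-type expression which is non-zero because $y$ and $z$ are algebraically independent generators of distinct weights $\varpi_2$, $2\varpi_1$. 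Since each $\theta$ is (by the above bimodule-map argument) a rational function of $q$ agreeing at $q=1$ with a non-zero classical value, it is non-zero on the punctured interval $(1,\epsilon)$ after possibly shrinking $\epsilon$. Note that $\theta''$ need not be controlled this way, consistent with the statement, since the "mixed" term $\partial y \wedge \ast_\kappa(\adel z)$ can vanish classically.

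The main obstacle I anticipate is bookkeeping the Hodge-map normalisation constants and the $q$-powers picked up when commuting the $e^{\pm}_i$ past one another inside $\Phi(\Om^{(\bullet,0)})$ and $\Phi(\Om^{(0,\bullet)})$ using relations \eqref{ii}--\eqref{nn} and \eqref{holii}--\eqref{holnn}: these are exactly the kind of computations that produced the polynomials $f_{l,I,J}(q)$, and here one needs the analogous coefficient extraction but now with the extra insertion of $\partial y$ or $\partial z$, so care is needed to verify that only the "diagonal" monomial $e^{+}_I\wedge e^{-}_I$ type term survives after the two applications of $\ast_\kappa$ and the final wedge. The representation-theoretic irreducibility and one-dimensionality arguments above are what reduce this to a single scalar per case, so the actual work is to identify that scalar and check its $q\to1$ limit; I would isolate it by evaluating both sides on the highest-weight vector, where the $u^i_j$-coefficients simplify drastically.
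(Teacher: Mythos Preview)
Your existence argument overcomplicates things and the structural input you invoke does not pin down the monomials. The paper's proof is a two-line observation: each of the four left-hand sides is built from the highest weight vectors $y,z$ via the $U_q(\mathfrak{g})$-equivariant operations $\partial,\adel,\ast_\kappa,\wedge$, hence is itself a highest weight vector in $\Om^{(0,0)}$ of weight $2\varpi_2$, $\varpi_2+2\varpi_1$, or $4\varpi_1$; since the decomposition $\Om^{(0,0)}=\bigoplus_{k,l}U_q(\mathfrak{g})\btr z^ky^l$ is multiplicity-free, such a vector is automatically a scalar multiple of $y^2$, $yz$, or $z^2$. Irreducibility of $\Om^{(1,0)}$ and one-dimensionality of $\mbox{}^{co(L_S)}\Phi(\Om^{(1,1)})$ are facts about form spaces, not about $\Om^{(0,0)}$, and do not by themselves exclude other monomials; it is the multiplicity-freeness of $\Om^{(0,0)}$ that does the work, and no Takeuchi computation is needed.

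For non-vanishing the $q\to 1$ strategy could in principle be made to work, but both of your justifications fail. The expressions in (i) and (iv) are \emph{not} squared norms: since $y$ and $z$ are not real, $-\ast_\kappa(\partial y\wedge\ast_\kappa(\adel y))$ computes classically to $g^{j\bar k}(\partial_j y)(\bar\partial_k y)$, a bilinear (not sesquilinear) contraction, which has no a priori sign. For $\theta'$, algebraic independence of $y$ and $z$ says nothing about whether $g^{j\bar k}(\bar\partial_k y)(\partial_j z)$ vanishes; two functions can easily be algebraically independent with pointwise-orthogonal differentials. A genuine classical check would require computing Casimir eigenvalues on $y^2$, $z^2$, $yz$ and extracting $\theta,\theta',\theta'''$, which you have not indicated. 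The paper avoids the classical limit entirely: it first shows that $\theta\neq 0$ is equivalent to $\partial y\wedge\adel y$ being non-primitive, then produces an explicit monomial $X$ in the $F_i$'s such that $[(\partial y\wedge\adel y)\ltr X]=\gamma\,e^+_{N-2}\wedge e^-_{N-2}$ in $\Phi(\Om^{(1,1)})$ with $\gamma\neq 0$, and finally uses the computed form \eqref{powerskap} of $[\kappa^{N-3}]$ to verify that $e^+_{N-2}\wedge e^-_{N-2}$ is not primitive for $q$ near $1$; the cases $\theta',\theta'''$ are handled by parallel explicit calculations.
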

\begin{proof}
We first note that both sides of the equations \eqref{adely2}-\eqref{adelz2} are highest weight vectors with the same weight. By Lemma \ref{Sphewh} and appendix \ref{tablesph} the decomposition  
\begin{align*}
\Omega^{(0,0)}= \cO_q(\mathbf{Q}_N) = \bigoplus_{k,l \in \mathbb{N}_0}U_q(\mathfrak{so}_N) \btr (z^k y^l) 
\end{align*}
is multiplicity-free and then there are constants $\theta, \theta', \theta'', \theta'''$ such that equations \eqref{adely2}-\eqref{adelz2} hold.

To show that $\theta$ is non-zero, we note first that $\del y \wedge \ast_{\kappa}(\adel y) \neq 0$ if and only if $\del y \wedge \adel y $ is a non-primitive form. This follows from the fact that
\begin{align*}
\del y \wedge \ast_{\kappa}(\adel y) = -i^{-1}\frac{1}{(n-1)!} \del y \wedge L^{n-1}(\adel y) = -i^{-1}\frac{1}{(n-1)!}L^{n-2+1}(\del y \wedge \adel y).
\end{align*}
A similar argument shows that $\ast_{\kappa}(\overline{\partial}y ) \wedge \partial z$, $\partial y \wedge \ast_{\kappa}(\overline{\partial}z)$ and $\ast_{\kappa}(\overline{\partial}z)\wedge \partial z$ are non-zero if and only if $\overline{\partial}y  \wedge \partial z$, $\partial y \wedge \overline{\partial}z$ and $\overline{\partial} z \wedge \partial z $ are non-primitive, respectively.

Since the Lefschetz map is a right $U_q(\mathfrak{so}_N)$-module map, if $\omega \in \Om^{(1,1)}$ is primitive then $X \btr \, \omega$ is primitive for any $X\in U_q(\mathfrak{so}_N)$.  
We claim that there exists $X\in U_q(\mathfrak{so}_N)$ such that 
\begin{align*}
[X \btr (\del y \wedge \adel y )] = \gamma [\del z_{jN}] \wedge [\adel z_{Nj}] ~~~~~~~~ \text{for some} \ \  j \in \{2, \ldots, N-1\} \ \ \text{and} \ \ \gamma\neq 0.
\end{align*}
or equivalently there exists $X' \in U_q(\mathfrak{so}_N)$ and $\gamma'\neq 0$ such that 
\begin{align*}
[(\del y \wedge \adel y) \ltr X' ] = \gamma' [\del z_{iN}] \wedge [\adel z_{Ni}] .
\end{align*}
We show this for the case $\mathfrak{so}_{2n+1}$ since for $\mathfrak{so}_{2n}$ the proof is similar.
We define the sequence
\begin{align*}
& X_1 := F_2, \, X_2 := F_3, \ldots , X_{n-1}:=F_n, \, X_n:=F_n, \, X_{n+1}:= F_{n-1}, \ldots, X_{2n-2}:=F_2, \\
& ~ X_{2n-1}:=F_2,\ldots, X_{3n-3}:=F_n, \, X_{3n-2}:=F_n, \, X_{3n-1}:= F_{n-1}, \ldots, X_{4n-4}:= F_2, \\
& ~~~~~~~~~~~~~~~~~~~~~~~~~~~~~~~~~ X_{4n-3}:=F_1, \, X_{4n-2}:=F_1.
\end{align*}
If $1\leq i_{1} < \cdots < i_r \leq 4n-2$ then by using the formulas \eqref{actEioddr}-\eqref{actEnoddr} it can be shown that if $y\ltr X_{i_1} \cdots X_{i_r} $ is non-zero then it is a multiple of one element of the following list
\begin{enumerate}[{(i')}]
\item  $z_{i,N} - q^{\gamma_i} z_{1,i'}$ \ \ $2\leq i \leq N-1$, \ \ $\gamma_i \in \mathbb{Z}$, \label{i'}
\item $z_{N,N}-q^{\eta_1}z_{N-1, N-1} +q^{\eta_2}z_{2,2} - q^{\eta_3}z_{1,1} $, \ \ $\eta_i \in \mathbb{Z}$,
\item $z_{i, N-1} -q^{a_{i}}z_{2,i'}$, \ \ $2\leq i \leq N-2$, \ \ $a_i \in \mathbb{Z}$,
\item $z_{N,N-1}- \mu z_{2,1}$, \ \ $\mu <0$, \label{iv'}
\end{enumerate}
and the only way to obtain $z_{N,N-1} - \mu z_{2,1}$ is by acting by $F_2\cdots F^2_n \cdots F_2F^2_1$ on $y = q(z_{2,N}-q^{-2}z_{1,N-1})$ from the right. Now we note that for any pair of weight elements $\omega, \nu \in \Omega^{\bullet}$ and $W = W_{1} \cdots W_{m} \in U_q(\mathfrak{g})$ we have that 
\begin{align*}
(\omega \wedge \nu ) \ltr W = \sum_{r} \sum_{p\in \mathcal{P}_{m,r}}  q^{\alpha_r} (\omega \ltr W_{p(1)} \cdots W_{p(r)}) \wedge (\nu \ltr W_{p(r+1)} \cdots W_{p(m)})  
\end{align*}
where $q^{\alpha_r}$ are constants obtained by acting with the Cartan elements $K_i$ on $\omega, \nu$ and $\mathcal{P}_{m,r}$ is the set of $(r,n-r)$-shuffles, that is, the set of permutations $p\in \mathcal{P}_{m}$ such that $p(1)< \cdots < p(r)$ and $p(r+1)< \cdots < p(m)$.

Now if we set $X:=X_1 \cdots X_{4n-2}$ then by using the previous list (\ref{i'}')-(\ref{iv'}') and Lemma \ref{Sphewh} (ii), we obtain 
\begin{align}\label{eNeN}
\nonumber [(\del(y)  \wedge \adel(y) ) \ltr X ]&\! = 
\sum_{\substack{r=0 \\ p\in \mathcal{P}_{4n-2,r}}}^{4n-2} q^{\alpha_r} [\del(y \ltr X_{p(1)} \cdots X_{p(r)})] \wedge [\adel(y \ltr X_{p(r+1)} \cdots X_{p(4n-2)})] \\
\nonumber &= q^{\alpha} [\del( y \ltr F_2 \cdots F_n^2 \cdots F_2)] \wedge [\adel (y \ltr F_2 \cdots F_n^2 \cdots F_2 F_1^2)] \\
\nonumber &= \gamma [\del(z_{N-1,N} \!-\!q^{\gamma_{N-1}}z_{1,2}) ] \wedge [\adel ( z_{N,N-1} \!-\! \mu z_{2,1})] \\
&=\gamma e^+_{N-2} \wedge e^{-}_{N-2}, 
\end{align}
where $\gamma <0$ and for the second identity we used the fact that many terms of the sum vanish when we take the quotient $[ \ \cdot \ ]$, and 
we also used the fact the $[\del z_{1,2}]=[\adel z_{2,1}]=0$ in the last line. 
Since the calculus has dimension $2(N-2)$, we claim that the form $e^{+}_{N-2} \wedge e^{-}_{N-2}$ is non-primitive, indeed by Lemma \ref{Kahlerform} we have
\begin{align*}
[\kappa^{N-3} ] \wedge e^{+}_{N-2} \wedge e^{-}_{N-2} & = e^{+}_{N-2} \wedge [\kappa]^{N-3} \wedge e^{-}_{N-2} \\
&= \sum_{I,J \in \Theta(N-3)}f_{N-3,I,J}(q) e^{+}_{N-2} \wedge e^{+}_I \wedge e^{-}_J \wedge e^{-}_{N-2}  \\
&= \sum_{I \in \Theta(N-3)}f_{N-3,I,J}(q) e^{+}_{N-2} \wedge e^{+}_I \wedge e^{-}_I \wedge e^{-}_{N-2} \\
& \quad + \sum_{I \neq J \in \Theta(N-3)}f_{N-3,I,J}(q) e^{+}_{N-2} \wedge e^{+}_I \wedge e^{-}_J \wedge e^{-}_{N-2}\\
&= f_{N-3, J, J}(q) e^{+}_{N-2}\wedge e^{+}_1 \wedge \cdots \wedge e^{+}_{N-3} \wedge e^{-}_{1} \wedge \cdots \wedge e^{-}_{N-3} \wedge e^{-}_{N-2}  \\
&= (-1)^{N-3}q^{-(N-3)} f_{N-3, J, J}(q)  e^{+}_1 \wedge \cdots \wedge e^{+}_{N-2} \wedge e^{-}_{1} \wedge \cdots \wedge e^{-}_{N-2}, 
\end{align*}
where $J=\{1,\ldots, N-3 \}$ and $f_{N-3, J, J}(q)\neq 0$ if $q$ belongs to a suitable interval around $1$. Since $e^{+}_{N-2} \wedge e^{-}_{N-2}$ is non-primitive then $(\del(y)  \wedge \adel(y) ) \ltr X_1 \cdots X_{4n-2}$ is non-primitive and therefore $\del(y)  \wedge \adel(y)$ is non-primitive. The calculation for $\adel y \wedge \del z$ is similar by using the fact that 
\begin{align*}
(\adel y \wedge \del z )\ltr F_1 =q \adel(z_{2,N}-q^{-2}z_{1,N-1}) \wedge \del( z_{2,N}+qz_{1,N-1}), 
\end{align*}
and we can proceed by acting with $X$ and obtain
\begin{align*}
[(\adel y \wedge \del z ) \ltr F_1 X] &=[(\adel y \wedge \del z )\ltr F_1(F_2 \cdots F_n^2 \cdots F_2)^2F_1^2 ] \\
&= q^{\beta} [\adel (y \ltr F_2 \cdots F_n^2 \cdots F_2 F_1^2)] \wedge [\del \big( (z_{2,N}+qz_{1,N-1}) \ltr F_2 \cdots F_n^2 \cdots F_2 \big) ] \\
&= \gamma' [\adel(z_{N,N-1} -\mu z_{2,1})] \wedge [ \del (z_{N-1,N} + q^{\gamma'_{N-1}}z_{1,2})] \\
&= \gamma' e^{-}_{N-2} \wedge e^{+}_{N-2},
\end{align*} 
where $\gamma'\neq 0$. Same calculation works for $\del y \wedge \adel z$
\begin{align*}
[(\del y \wedge \adel z ) \ltr F_1 X] &= [(\del y \wedge \adel z )\ltr F_1(F_2 \cdots F_n^2 \cdots F_2)^2F_1^2 ] \\
&= q^{\beta} [\del (y \ltr F_2 \cdots F_n^2 \cdots F_2 )] \wedge [\adel \big( (z_{2,N}+qz_{1,N-1}) \ltr F_2 \cdots F_n^2 \cdots F_2 F_1^2  \big) ] \\
&= \gamma'' [\del(z_{N-1,N} - q^{\gamma_{N-1}} z_{2,1})] \wedge [ \adel (z_{N,N-1} + q^{\gamma''_{N-1}}z_{1,2})] \\
&= \gamma'' e^{+}_{N-2} \wedge e^{-}_{N-2},
\end{align*} 
for some $\gamma''\neq 0$. From this we can conclude that $\theta'$ and $\theta''$ are not zero.

The fact that $e^{-}_{N-2} \wedge e^{+}_{N-2}$ is non-primitive is shown by using the fact that the K\"ahler form $\kappa$ is non-zero multiple of
\begin{align*}
\mathrm{i} \sum_{i,j=1}^N q^{2(N-i)} \adel z_{ij} \wedge \del z_{ji}
\end{align*}
satisfying analogous results as those of Lemma \ref{Kahlerform}, i.e., there exist $g_{l,I,J}(q)\in \mathbb{C}[q,q^{-1}]$ such that
\begin{align*}
[\kappa^{l}]= \mathrm{i}^{l} \sum_{I,J \in \Theta(l)} g_{l,I,J}(q) e^{-}_I \wedge e^{+}_J, 
\end{align*}
where $\mathrm{sgn}(g_{l,I,I}(1)) = (-1)^{\frac{l(l-1)}{2}}$ for all $I\in \Theta(l)$
and $g_{l,I,J}(1)=0$, $I\neq J$.

Much in the same way is the calculation for $\adel(z) \wedge \del z$: if we act with $F_1$ from the right we obtain 
\begin{align}\label{zzF1}
\nonumber (\adel z_{1,N} \wedge \del z_{1,N}) \ltr F_1^2& = \alpha \adel(z_{2,N} + q z_{1,N-1}) \wedge \del( z_{2,N}+ q z_{1,N-1}) + \beta  \adel( z_{1,N}) \wedge \del(z_{2,N-1}) \\
&\quad + \zeta  \adel(z_{2,N-1})\wedge \del(z_{1,N}),
\end{align}
where $\alpha, \beta, \zeta >0$. Now a routine calculation shows that
\begin{align}\label{z2NX}
& \big[\big(\adel( z_{1,N}) \wedge \del(z_{2,N-1}) \big) \ltr (F_2 \cdots F_n^2 \cdots F_2  )^2F_1^2 \big] =0, \\
\nonumber & \big( \adel ( z_{2,N-1}) \wedge  \del(z_{1,N}) \big) \ltr  (F_2 \cdots  F_n^2 \cdots F_2  )^2F_1^2 = \chi \big( -\adel(z_{N,2}+q^{\alpha} z_{N-1,1} ) \wedge \del(z_{2,N}+q^{\beta}z_{1,N-1}) \\
& \quad + \vartheta_1 \adel(z_{N-1,2}) \wedge \del( z_{2,N-1}) + \vartheta_2 \adel(z_{N,1})\wedge \del(z_{1,N}) \big), 
\end{align}
for a certain real $\chi>0$ and real $\vartheta_1$, $\vartheta_2$. On the other hand, similar calculations as given in \eqref{eNeN} show that
\begin{align}\label{z1+NX}
\big[ \big( \adel(z_{2,N} + q z_{1,N-1}) \wedge \del( z_{2,N}+ q z_{1,N-1}) \big) \ltr (F_2 \cdots F_n^2 \cdots F_2 & )^2F_1^2 \big] = -\xi e^{-}_{N-2}\wedge e^{+}_{N-2},
\end{align}
for a real $\xi>0$. Now combining equations \eqref{zzF1}-\eqref{z1+NX} we have
\begin{align*}
\big[ (\adel z_{1,N} \wedge \del z_{1,N}) \ltr F_1^2(F_2 \cdots F_n^2 \cdots F_2 & )^2F_1^2 \big] = -xi e^{-}_{N-2}\wedge e^{+}_{N-2} - \chi e^{-}_1 \wedge e^{+}_{1}
\end{align*}
which is non-primitive since $-\xi$ and $-\chi$ are both negative and we are assuming that $q$ belongs to a suitable open interval $I$ around $1$.
Finally, similar calculations work when $\mathfrak{g}=\mathfrak{so}_{2n}$ where in this case the element $X$ is given by
$X:= (F_2 \cdots F_n F_{n-2} \cdots F_2)^2 F_1^2.$

\end{proof}

With this lemma and results of previous sections we are now ready to calculate the eigenvalues of the Laplace operator $\Delta_{\adel}$ on zero forms.

\begin{prop}
For $q\in I$ the eigenvalue of the Dolbeault Laplace operator $\Delta_{\adel}$ on zero forms goes to infinity with finite multiplicity and therefore the operator has compact resolvent.
\end{prop}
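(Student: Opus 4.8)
The plan is to diagonalise $\Delta_{\adel}$ with respect to the Peter--Weyl-type decomposition $\Omega^{(0,0)} = \cO_q(\mathbf{Q}_N) = \bigoplus_{k,l \in \mathbb{N}_0} U_q(\mathfrak{g}) \btr z^k y^l$, which is multiplicity-free as a $U_q(\mathfrak{g})$-module. Since $\Delta_{\adel}$ is $U_q(\mathfrak{g})$-equivariant (it is built from $\adel$, $\adel^\dagger$, and the Hodge map $\ast_\kappa$, all of which are module maps), by Schur's lemma it acts as a scalar $\lambda_{k,l}$ on each isotypical component $U_q(\mathfrak{g}) \btr z^k y^l$. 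The multiplicity-free property guarantees these components are genuinely distinct irreducibles, so each eigenvalue has finite multiplicity automatically, and the whole problem reduces to showing $\lambda_{k,l} \to \infty$ as $k+l \to \infty$.

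Next I would compute $\lambda_{k,l}$ explicitly by evaluating $\adel^\dagger \adel (z^k y^l)$ on the highest weight vector $z^k y^l$ (on zero forms $\adel^\dagger\adel = \Delta_{\adel}$ since $\adel^\dagger$ kills functions). Using $\adel^\dagger = -\ast_\kappa \circ \del \circ \ast_\kappa$ from \eqref{adjoints} and the Leibniz rule, $\adel(z^k y^l)$ expands in terms of $\adel z$ and $\adel y$ with coefficients in $\cO_q(\mathbf{Q}_N)$; applying $\ast_\kappa$, then $\del$, then $\ast_\kappa$ again, the four basic contractions computed in Lemma \ref{Consts} — namely $-\ast_\kappa(\del y \wedge \ast_\kappa(\adel y)) = \theta y^2$, $-\ast_\kappa(\ast_\kappa(\adel y)\wedge \del z) = \theta' yz$, $-\ast_\kappa(\del y \wedge \ast_\kappa(\adel z)) = \theta'' yz$, and $-\ast_\kappa(\ast_\kappa(\adel z)\wedge \del z) = \theta''' z^2$ — together with the $q$-commutation relations of Lemma \ref{adelzzadel} (e.g. $\adel z \cdot z = q^2 z \adel z$) let one collect everything back into a scalar multiple of $z^k y^l$. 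The upshot should be a formula of the shape
\begin{align*}
\lambda_{k,l} = a\,[k]_{q}[k+c_1]_{q} + b\,[l]_{q}[l+c_2]_{q} + \text{(cross and lower-order terms)},
\end{align*}
where $[m]_q = (q^m - q^{-m})/(q-q^{-1})$ and $a,b$ are positive multiples of $-\theta'''$ and $-\theta$ respectively (positivity coming from positive-definiteness of the inner product \eqref{innerprodc}, which forces $\Delta_{\adel} \geq 0$, and from $\theta,\theta',\theta'''$ being nonzero as established in Lemma \ref{Consts}).

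Finally, since $q > 1$, the $q$-integers $[m]_q$ grow like $q^m/(q-q^{-1})$, so any nonzero positive combination of $[k]_q[k+c_1]_q$ and $[l]_q[l+c_2]_q$ diverges as $k+l\to\infty$; one must only check that the cross terms and lower-order corrections, which are $O(q^{k})$ and $O(q^{l})$ while the leading terms are $\Theta(q^{2k})$ and $\Theta(q^{2l})$, cannot produce cancellation — this follows from the sign/positivity information on the $\theta$'s, or alternatively from the a priori bound $\lambda_{k,l} = \|\adel(z^k y^l)\|^2/\|z^k y^l\|^2 \geq 0$ combined with the explicit leading behaviour. Having $\lambda_{k,l}\to\infty$ with each eigenspace finite-dimensional is exactly the statement that $(1+\Delta_{\adel})^{-1}$, and hence $(1+D_{\adel})^{-1}$ restricted to zero forms, is compact. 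The main obstacle I anticipate is the bookkeeping in the second step: tracking the precise $q$-powers produced when moving $\adel z, \adel y$ past $z,y$ through a degree-$(k,l)$ monomial and verifying that the leading coefficients $a,b$ are strictly positive rather than merely nonzero — this is where the positive-definiteness of the Hermitian metric and the sign computations of Lemma \ref{Kahlerform} and Lemma \ref{Consts} must be invoked carefully.
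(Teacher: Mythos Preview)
Your strategy is the paper's: use the multiplicity-free decomposition, compute $\Delta_{\adel}(y^k z^l)$ via Lemmas~\ref{adelzzadel} and~\ref{Consts}, and extract the signs of the structure constants from the a~priori bound $\Delta_{\adel}\geq 0$. The gap is in the shape of the eigenvalue you anticipate and the asymptotics you draw from it.

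The commutation relations of Lemma~\ref{adelzzadel} are one-sided ($\adel y\, y = q^{2} y\,\adel y$ but $\del y\, y = q^{-2} y\,\del y$), so what appears is not the symmetric $[m]_q$ but the asymmetric integers $(m)_{q^{2}}$ (which grow like $q^{2m}$) and $(m)_{q^{-2}}$ (which are \emph{bounded} for $q>1$). The eigenvalue the paper actually obtains is
\[
\theta\,(k)_{q^2}(k{-}1)_{q^{-2}} + \mu_{y}\,(k)_{q^2} + \theta'\,(l)_{q^2}(k)_{q^2} + \theta''\,(l)_{q^{-2}}(k)_{q^{-2}} + \mu_z\,(l)_{q^{-2}} + \theta'''\,(l)_{q^{-2}}(l{-}1)_{q^2},
\]
so the cross term $\theta'(l)_{q^2}(k)_{q^2}$ is $\Theta(q^{2(k+l)})$ and \emph{dominates} everything else; it is not a lower-order correction $O(q^k)$ or $O(q^l)$ as you assert. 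Your planned argument that two diagonal quadratic terms control the growth therefore does not go through. The repair is exactly the positivity bootstrap you mention, but applied term by term rather than to an overall leading coefficient: fixing $k$ and letting $l\to\infty$ forces $\theta'>0$; setting $k=0$ and using $\Delta_{\adel}(z^l)\geq 0$ forces $\theta'''>0$; the boundary case $l=0$ then needs the combination $(\theta(k{-}1)_{q^{-2}}+\mu_y)(k)_{q^2}$, whose limiting sign is again constrained by positivity. Note in particular that $\theta',\theta'''$ come out \emph{positive}, so your description of the leading coefficients as positive multiples of $-\theta'''$ and $-\theta$ has the signs reversed.
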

\begin{proof}
By Lemma \ref{Sphewh},(ii) we have a multiplicity-free decomposition
\begin{align*}
\mathcal{O}_q(\mathbf{Q}_n) = \bigoplus_{k,l\in \mathbb{N}_0} U_q(\mathfrak{so}_N) \btr (z^{k}y^{l}).
\end{align*}
Since $\Delta_{\adel}(z^{k}y^{l})$ is a weight element with the same weight as $z^{k}y^{l}$, multiplicity-freenes implies that $z^k y^l$ is an eigenvector of $\Delta_{\adel}$, in particular $z$ and $y$ are eigenvectors of $\Delta_{\adel}$. Now combining Lemma \ref{adelzzadel}, Lemma \ref{Consts} and the identities \eqref{adjoints} we have
\begin{align*}
\Delta_{\overline{\partial}}(y^k z^l) &= \overline{\partial}^{\dagger}\overline{\partial}(y^k z^l) \\[3pt]
&= \overline{\partial}^{\dagger} \big( (k)_{q^2} y^{k-1}\overline{\partial}y z^l + (l)_{q^{-2}}y^k\overline{\partial}z z^{l-1} \big) \\[3pt]
&= - \ast_{\kappa} \circ \partial \big( (k)_{q^2} y^{k-1} \ast_{\kappa}(\overline{\partial}y) z^l + (l)_{q^{-2}}y^k \ast_{\kappa}(\overline{\partial}z) z^{l-1} \big) \\[3pt]
&= -\ast_{\kappa}\big( (k)_{q^2} y^{k-2} \partial y \wedge \ast_{\kappa}(\overline{\partial}y) z^l + (k)_{q^2} y^{k-1}(\partial \circ \ast_{\kappa} \circ \overline{\partial}y) z^l \\
& \quad + (k)_{q^2}(l)_{q^2} y^{k-1} (\ast_{\kappa} \circ \overline{\partial}y ) \wedge \partial z z^l  + (l)_{q^{-2}}(k)_{q^{-2}}y^{k-1}\partial y \wedge \ast_{\kappa}(\overline{\partial}z)z^{l-1} \\
& \quad +(l)_{q^{-2}}y^{k} \partial \circ \ast_{\kappa}(\overline{\partial}z)z^{l-1} + (l)_{q^{-2}}(l-1)_{q^2}y^k \ast_{\kappa}(\overline{\partial}z)\wedge \partial z z^{l-2} \big) \\[3pt]
&= - (k)_{q^2} y^{k-2} \ast_{\kappa}(\partial y \wedge \ast_{\kappa}(\overline{\partial}y)) z^l - (k)_{q^2} y^{k-1} \ast_{\kappa}(\partial \circ \ast_{\kappa} \circ \overline{\partial}y) z^l \\
& \quad - (k)_{q^2}(l)_{q^2} y^{k-1} \ast_{\kappa}((\ast_{\kappa} \circ \overline{\partial}y ) \wedge \partial z) z^l  - (l)_{q^{-2}}(k)_{q^{-2}}y^{k-1}\ast_{\kappa}(\partial y \wedge \ast_{\kappa}(\overline{\partial}z))z^{l-1} \\
& \quad -(l)_{q^{-2}}y^{k} \ast_{\kappa}(\partial \circ \ast_{\kappa}(\overline{\partial}z))z^{l-1}  -(l)_{q^{-2}}(l-1)_{q^2}y^k \ast_{\kappa}(\ast_{\kappa}(\overline{\partial}z)\wedge \partial z) z^{l-2} \\[3pt]
& = \big[ \theta(k)_{q^2}(k-1)_{q^{-2}} + (k)_{q^2}\mu_{y} + (l)_{q^2}(k)_{q^2}\theta' + (l)_{q^{-2}}(k)_{q^{-2}}\theta'' + (l)_{q^{-2}}\mu_z \\
& \quad + (l)_{q^{-2}}(l-1)_{q^2}\theta''' \big] y^k z^l,
\end{align*}
where $\mu_z$ and $\mu_y$ are the eigenvalues of $z$ and $y$, respectively and $(n)_q$ is given by
\begin{align*}
(n)_q:= \frac{1-q^n}{1-q} = q^{(n-1)/2}[n]_{q^{1/2}},
\end{align*}
being another version of $q$-numbers. A differential calculus $(\Omega^{(\bullet,\bullet)},\del,\adel)$ is said to be \emph{connected} if 
\begin{align*}
\text{Ker}(\del: \Om^{(0,0)} \rightarrow \Om^{(1,0)}) = \text{Ker}(\adel: \Om^{(0,0)} \rightarrow \Om^{(0,1)}) = \mathbb{C}1. 
\end{align*}
Since for any irreducible quantum flag manifold the Heckengerger--Kolb calculus is connected \cite[Theorem 4.4]{AFO} and $\mathrm{ker}(\Delta_{\adel})= \mathrm{ker}(\adel^{\dagger})\cap \mathrm{ker}(\adel)$ \cite[Lemma 6.1]{RO}, then $\mu_z$ and $\mu_y$ are positive. First we assume that $q>1$, then $(l)_{q^{-2}}(k)_{q^{-2}}\theta''$ and  $(l)_{q^{-2}}\mu_z $ have finite limit as $k,l\rightarrow \infty$ and therefore they do not contribute in the asymptotic behaviour of the eigenvalue. 
By Lemma \ref{Consts} we have that $\theta'\neq 0$. It is not difficult to see that if $\theta' < 0$, $k$ is fixed and sufficiently large and $l\rightarrow \infty$ then the eigenvalue would tend to $-\infty$ and this would contradict the fact that the eigenvalues of the Laplacian are non-negative. Therefore $\theta' >0$ and the term $(l)_{q^2}(k)_{q^2}\theta' \rightarrow \infty$ as $k,l \rightarrow \infty$. Now we consider the term
\begin{align}\label{theta'=0}
\big( \theta \mu_y^{-1}(k -1)_{q^{-2}}+1 \big)(k)_{q^2}\mu_y. 
\end{align}
First we note that $\theta < -(1-q^{-2})\mu_y$ implies that
\begin{align*}
\big(1+(k-1)_{q^{-2}} \theta \mu_y^{-1}\big)<0, ~~~~~ \text{as} \ \ k\rightarrow \infty,
\end{align*}
and this would imply that the eigenvalues of $\Delta_{\adel}$ on $y^k$ are negative. Therefore $\theta \geq -(1-q^{-2})\mu_y$. If $\theta = -(1-q^{-2})\mu_y$ then $(\theta \mu_y^{-1}(k -1)_{q^{-2}}+1 )(k)_{q^2}\mu_y$ converges when $k \rightarrow \infty$. On the other hand, if $\theta > -(1-q^{-2})\mu_y$ then $(\theta \mu_y^{-1}(k -1)_{q^{-2}}+1 )(k)_{q^2}\mu_y$ tends to infinity. This means that in any case the value of $\theta$ does not affect the required asymptotic behaviour of the eigenvalue. 

Finally if $\theta''' <0$ then 
\begin{align*}
\Delta_{\overline{\partial}}(z^l)= \theta'''(l)_{q^{-2}}(l-1)_{q^2} + (l)_{q^{-2}}\mu_z = (\mu_z^{-1}\theta'''(l-1)_{q^2} +1)\mu_z (l)_{q^{-2}} < 0,
\end{align*} 
as $l \rightarrow \infty$ which contradicts the fact the eigenvalues of $z^l$ are non-negatives. Then $\theta'''\geq 0$. By Lemma \ref{Consts} we have that $\theta''' >0$ and this implies that the term $(l)_{q^{-2}}(l-1)_{q^2}\theta'''$ 
tends to infinity as $l \rightarrow \infty$. Combining the above results we can conclude that the eigenvalues of $\Delta_{\adel}$ tend to infinity, as $k,l \rightarrow \infty$.
For the case $q<1$, by using a similar argument as that of $\theta'$ it can be shown that $\theta''$ is positive. Since the term $(l)_{q^{-2}}(k)_{q^{-2}}\theta''$ grows faster than the others, then the eigenvalue goes to infinity when $k,l \rightarrow \infty$.   
\end{proof}


\appendix

\section{Spherical weights} \label{tablesph}

With respect to the left action defined in Lemma \ref{Sphewh}, the weights of the highest weight vectors of $\cO_q(G/L_S)$ form an additive monoid $Z_S \subset \mathfrak{h}^{*}$. By \cite[Proposition 4.1]{DST} the minimal number of generators on $Z_S$, considered as monoid, is the same as the classical case $q=1$. This minimal set of generators are called the \emph{spherical weights} which was presented by Kr\"amer \cite[Tabelle 1]{MK} in the classical case and we present here in Table \ref{sphericalweights} below for each irreducible quantum flag manifold.

We remark that for the case of $\cO_q(\mathbf{S}_{2n})$, the weight $2\varpi_{2n-1}$ or $2\varpi_{2n}$ appears according to the defining crossed node, as presented in table \ref{table:CQFMs} above. Finally we note that in part (ii) of the Lemma \ref{Sphewh} we used the fact that the weights $2\varpi_1, \varpi_2$ form a minimal set of generators for  the weights of the highest weights vectors of $\cO_q(\mathbf{Q}_N)$. 

\begin{center}
\begin{table}[ht]

\centering

\captionof{table}{\small{Spherical weights according to the numbering of the Dynkin nodes in \cite[\textsection 11.4]{JH}.}}\label{sphericalweights}
{\small \renewcommand{\arraystretch}{2}%
\begin{tabular}{|c|c|}
 
\hline

\small $\cO_q(\text{Gr}_{r,s})$ & $\varpi_1 + \varpi_{r+s-1}, \varpi_2 + \varpi_{r+s-2}, \ldots, \varpi_r + \varpi_{s}$ \\


\small $\cO_q(\mathbf{Q}_{2n+1})$ & $2\varpi_1, \varpi_2$  \\ 


\small $\cO_q(\mathbf{L}_{n})$ & $2\varpi_1, 2\varpi_2, \ldots, 2\varpi_n$   \\ 


\small $\cO_q(\mathbf{Q}_{2n})$ & $2\varpi_1, \varpi_2$  \\ 


\small $\cO_q(\textbf{S}_{2n})$ & $\varpi_2, \varpi_4, \ldots, \varpi_{2n-2}, 2\varpi_{2n-1}$ or $2\varpi_{2n}$  \\


\small $\cO_q(\textbf{S}_{2n+1})$ & $\varpi_2, \varpi_4, \ldots, \varpi_{2n-2}, \varpi_{2n} + 2\varpi_{2n+1}$  \\


\small  $\cO_q(\mathbb{OP}^2)$ & $\varpi_1 +\varpi_6, \varpi_2$  \\


\small   $\cO_q(\textrm{F})$ & $2\varpi_7, \varpi_1, \varpi_6$  \\
\hline
\end{tabular}
}
\end{table}
\end{center}

\subsection*{Acknowledgments} 
FDG was supported by the Charles University PRIMUS grant \emph{Spectral Noncommutative Geometry of Quantum Flag Manifolds} PRIMUS/21/SCI/026. The author would like to thank R\'eamonn \'O Buachalla for useful discussions during the preparation of this paper.




\begin{thebibliography}{99}


\bibitem{AFO} A. Carotenuto, F. D\'iaz Garc\'ia, R. \'O Buachalla: \emph{A Borel--Weil theorem for the irreducible quantum flag manifolds}. To appear in International Mathematics Research Notices.

\bibitem{FD} F. D'Andrea, L. D\c{a}browski: \emph{Dirac operators on quantum projective spaces}. Commun. Math. Phys. \textbf{295}, 731-790, 2010.

\bibitem{FDL} F. D'Andrea, L. D\c{a}browski, G. Landi: \emph{The noncommutative geometry of the quantum projective plane}. Rev. Math. Phys. \textbf{20}, 979-1006, 2008.

\bibitem{DLE} F. D'Andrea, L. D\c{a}browski, G. Landi, E. Wagner: \emph{Dirac operators on all Podle\'s quantum spheres}. J. Noncommut. Geom. \textbf{1}, 213-239, 2007.

\bibitem{DLS} L. D\c{a}browski, G. Landi, A. Sitarz, W. van Suijlekom, J.C. V\'arilly: \emph{The Dirac operator on $SU_q(2)$}. Commun. Math. Phys. \textbf{259}, 729-759, 2005.

\bibitem{DS} L. D\c{a}browski, A. Sitarz: \emph{Dirac operator on the standard Podle\'s quantum sphere}. Banach Cent. Publ. \textbf{61}, 49-58, 2003.

\bibitem{RO1} B. Das, R. \'O Buachalla, P. Somberg: \emph{A Dolbeault--Dirac spectral triple for quantum proyective space}. Doc. Math. \textbf{25}, 1079-1157, 2020.

\bibitem{RO2} B. Das, R. \'O Buachalla, P. Somberg: \emph{Compact quantum homogeneous K\"ahler spaces}. 
arXiv:1910.14007.

\bibitem{RP} B. Das, R. \'O Buachalla, P. Somberg: \emph{Spectral gaps for twisted Dolbeault--Dirac operators over the irreducible quantum flag manifolds}. arXiv:2206.10719.

\bibitem{FRW} F. D\'iaz Garc\'ia, R. \'O Buachalla, E. Wagner: \emph{A Dolbeault--Dirac spectral triple for the $B_2$-irreducible quantum flag manifold}. arXiv:2109.09885.  

\bibitem{FRK} F. D\'iaz Garc\'ia, A. Krutov, R. \'O Buachalla, P. Somberg, K. R. Strung: \emph{Positive line bundles over the irreducible quantum flag manifolds}. arXiv:1912.08802.

\bibitem{DST} M. S. Dijkhuizen, J. V. Stokman: \emph{Quantized flag manifolds and irreducible $\ast$-representation}. Comm. Math. Phys. \textbf{203}, 297-324, 1999.

\bibitem{FRT} L. D. Faddeev, N. Y. Reshetikhin, and L. A. Takhtadzhyan: \emph{Quantization of Lie groups
and Lie algebras}. Algebra i Analiz \textbf{1}, no. 1, 178-206, 1989.

\bibitem{HK0} I. Heckenberger, S. Kolb: \emph{The locally finite part of the dual coalgebra of irreducible quantum flag manifolds}. Proc. London Math. Soc. \textbf{89}, 457-484, 2004.

\bibitem{HK} I. Heckenberger, S. Kolb: \emph{De Rham complex for the quantized irreducible flag manifolds}. J. Algebra, \textbf{305}, 704-741, 2006.

\bibitem{HK1} I. Heckenberger, S. Kolb: \emph{Differential forms via the Bernstein--Gelfand--Gelfand resolution for quantized irreducible flag manifolds}. J. Geom. Phys. \textbf{57}, 2316-2344, 2007.

\bibitem{JH} J. Humphreys: \emph{An introduction to Lie algebras and representation theory}. Springer-Verlag, New York, 1972.

\bibitem{DH} D. Huybrechts: \emph{Complex geometry: an introduction}. Universitext, Springer-Verlag, Heidelberg, 2005.

\bibitem{KS} A. Klimyk, K. Schm\"udgen: \emph{Quantum groups and their representations}. Texts and Monograph in Physics, Springer-Verlag, Berlin, 1998.

\bibitem{KTS} U. Kr\"ahmer, M. Tucker-Simmons: \emph{On the Dolbeault--Dirac operator on quantized symmetric spaces}. Trans. London Math. Soc. \textbf{2}, 33-56, 2015.  

\bibitem{MK} M. Kr\"amer: \emph{Sph\"arische Untergruppen in kompakten zusammenh\"angenden Liegruppen}. Compisitio Math. \textbf{38}, 129-153, 1979.

\bibitem{SM} S. Majid: \emph{Noncommutative Rieamannian and spin geometry of the standard $q$-sphere}. Comm. Math. Phys. \textbf{256}, 255-285, 2005. 

\bibitem{MM} M. Matassa: \emph{K\"ahler structures on quantum irreducible flag manifolds}. J. Geom. Phys. \textbf{145} (2019), 103477.

\bibitem{MM1} M. Matassa: \emph{The Parthasarathy formula and a spectral triple for the quantum Lagrangian Grassmanians of rank two}. Lett. Math. Phys. \textbf{109}, 1703-1734, 2019.

\bibitem{NT} S. Neshveyev, L. Tuset: \emph{The Dirac operator on compact quantum groups}. J. Reine. Angew. Math. \textbf{641}, 1-20, 2010.

\bibitem{RO} R. \'O Buachalla: \emph{Noncommutative K\"ahler structures of quantum homogeneous spaces}. Adv. Math. \textbf{322}, 892-939, 2017. 

\bibitem{ROC} R. \'O Buachalla: \emph{Noncommutative complex structures on quantum homogeneous spaces}. J. Geom. Phys. \textbf{99}, 154-173, 2016.

\bibitem{OV} A.L. Onishchick, E. B. Vinberg: \emph{Lie groups and algebraic groups}. Springer-Verlag, Heidelberg, 1990.

\bibitem{MT} M. Takeuchi: \emph{Relative Hopf modules - equivalence and freeness criteria}. J. Algebra, \textbf{60}, 452-471, 1979.



\end{thebibliography}
\end{document}